\newtheorem{lemma}{Lemma}[section]
\newtheorem{theorem}[lemma]{Theorem}
\newtheorem{corollary}[lemma]{Corollary}
\newtheorem{proposition}[lemma]{Proposition}
\newcommand{\epi} {\mathop{\rm epi}}
\newcommand{\outlim} {\mathop{\rm LimOut\,}}
\newcommand{\innlim} {\mathop{\rm LimInn\,}}
\newcommand{\slim} {\mathop{\rm lim\,sup\,}}
\newcommand{\ilim} {\mathop{\rm lim\,inf\,}}
\newcommand{\elim}{\mathop{\rm e\mbox{-}lim}}
\def\U{\mathbb{U}}
\def\N{\mathbb{N}}
\def\h{\mathbf{I}}
\def\X{\mathbb{X}}
\def\E{\mathbb{E}}
\def\R{\mathbb{R}}
\def\B{\mathcal{B}}
\def\ball{\mathbb{B}}
\def\PP{\mathcal{P}}
\def\RR{\overline{\mathbb{R}}}
\def\nnmin{\mathop{\rm minimize}}
\begin{document}

\begin{center}
\begin{large}
{\bf Epi-Convergence of Expectation Functions under Varying Measures and Integrands}
\smallskip
\end{large}
\vglue 0.3truecm
\begin{tabular}{c}
  {\sl Eugene A. Feinberg}\\
  Department of Applied Mathematics and Statistics, Stony Brook University\\
  {\sl Pavlo O. Kasyanov}\\
  Institute for Applied System Analysis, National Technical University of Ukraine\\
  ``Igor Sikorsky Kyiv Polytechnic Institute''\\
  {\sl Johannes O. Royset}\\
  Operations Research Department, Naval Postgraduate School
\end{tabular}

\vskip 0.4truecm

\today

\end{center}

\vskip 0.3truecm

\noindent {\bf Abstract}. For expectation functions on metric spaces, we provide sufficient conditions for epi-convergence under varying probability measures and integrands, and examine applications in the area of sieve estimators, mollifier smoothing, PDE-constrained optimization, and stochastic optimization with expectation constraints. As a stepping stone to epi-convergence of independent interest, we develop parametric Fatou's lemmas under mild integrability assumptions. In the setting of Suslin metric spaces, the assumptions are expressed in terms of Pasch-Hausdorff envelopes. For general metric spaces, the assumptions shift to semicontinuity of integrands also on the sample space, which then is assumed to be a metric space.

\vskip 0.2truecm


\baselineskip=15pt

\section{Introduction}

It is well known that epi-convergence is a suitable framework for analyzing approximating optimization problems. It furnishes guarantees about near-minimizers, level-sets, and minimum values for the problems as compared to those of a limiting, actual problem; see for example \cite{Attouch.84,BRW,VaAn}. A recent summary of properties for functions defined on metric spaces is given by \cite{RoysetWets.19}. Epi-convergence is therefore central in the development of optimization algorithms, which often rely on approximations, and in stability analysis of optimization problems. It also emerges as a means to establish strong duality for dual problems obtained via general perturbation schemes \cite[Section 5.F]{primer}. In this paper, we provide sufficient conditions for epi-convergence of expectation functions under varying measures and integrands.

The study of expectation functions is critical for stochastic optimization and statistical estimation. The first systematic examination of epi-convergence for such functions appears to be \cite{DupacovaWets}, which also includes a review of the earlier literature. The setting is limited to expectation functions defined on $\R^n$, a fixed integrand, and probabilities converging weakly under a tightness assumption. An extension to separable reflexive Banach spaces, with a focus on Mosco-epi-convergence, is developed in \cite{LucchettiWets.93} under similar assumptions. For separable Banach spaces, \cite{AttouchWets.90} establishes epi-convergence for empirical distributions (generated by pairwise independent sampling) and a fixed integrand, and also includes extensions to convergence in the stronger Attouch-Wets topology. A further extension to complete separable metric spaces is given by \cite{ArtsteinWets.95}, again under a fixed integrand and empirical distributions obtained by independent sampling. A nearly identical setting is considered in \cite{Hess}, which also develops statistical applications. A specialization to the case of convex integrands is provided by \cite{KingWets.91}; see also \cite{SurBirge}. Going beyond empirical distributions, \cite{KorfWets.01,Seri2} establish epi-convergence of expectation functions for ergodic processes. Recently, \cite{Diem} reexamines epi-convergence under weakly converging probabilities as in \cite{DupacovaWets}, but now in the general setting of Hausdorff topological spaces. Without making explicit assumptions on the sequence of probabilities, but instead expressing assumptions in terms of Pasch-Hausdorff envelopes, \cite{Seri3} develops a general framework for establishing epi-convergence of expectation functions defined on separable metric spaces.

While we make precise comparison below, the present paper is distinct by considering {\em varying integrands}; the above studies keep the integrand fixed as the probabilities change. Varying integrands appear in applications, for example, as the result of computationally motivated approximations that need to be consider in conjunction with the varying probabilities. Sieves in statistical estimation  \cite{GemanHwang.82, RoysetWets.20} as well as discretization of an infinite-dimensional space of controls and numerical solution of differential equations \cite{SchwartzPolak.96,PhelpsRoysetGong.16,ChenRoyset.22b} can be viewed from this angle. Moreover, we establish conditions under which a lower limit of approximating expectation functions is bounded from below by the actual expectation function in the sense of a ``parametric'' Fatou's lemma with varying measure. This property furnishes the ``first half'' of epi-convergence (a limit gives the ``second half''), but we argue that it stands on its own as a useful property, especially in difficult cases when epi-convergence might be beyond reach.

To formalize the setting, we let $\RR:=\R \cup \{-\infty, +\infty\}$ and denote by $\B(\RR)$ its Borel $\sigma$-algebra. For probability space $(\Omega,\mathcal{A},\mu)$ and $(\mathcal{A},\B(\RR))$-measurable function $g:\Omega\to\RR$, the integral
\[
\int_\Omega g(\omega) \mu(d\omega) := \int_\Omega g_+(\omega) \mu(d\omega) - \int_\Omega g_-(\omega) \mu(d\omega),
\]
where $g_+(\omega) := \max\{g(\omega), 0\},$ $g_-(\omega) := - \min\{g(\omega), 0\},$ and we use the conventions $+\infty - \alpha = +\infty$ and $\beta - (+\infty) = -\infty$ hold for $\alpha \in \RR$ and $\beta\in \R$ everywhere in the paper. This integral is an expectation, and we often use the notation
\[
\E^\mu\big[g(\omega)\big] := \int_\Omega g(\omega) \mu(d\omega).
\]

Let $\hat{\mathcal{A}}$ be the $\mu$-completion of the $\sigma$-algebra $\mathcal{A}.$  Then the integral is always defined for an $\hat{\mathcal{A}}$-measurable function $g:(\Omega,\hat{\mathcal{A}})\to (\RR,\mathcal{B}(\RR)),$ and, if both integrals of $g_+$ and $g_-$ are equal to $+\infty,$ then the integral of $g$ is also equal to $+\infty.$

Let $(\X,d_\X)$ be a metric space and $(\Xi,\mathcal{F})$ be a measurable space. A function $f:\Xi\times \X\to \RR$ is an {\em integrand} if $f(\cdot,x)$ is $(\mathcal{F},\B(\RR))$-measurable for each $x\in \X$. An integrand $f$ together with a probability $P$ on $(\Xi,\mathcal{F})$ define an {\em expectation function} $E^P[f]:\X\to \RR$ given by
\[
E^P\big[f\big](x) := \E^{P}\big[f(\xi,x)\big],\quad x\in \X.
\]
If $\Xi$ is a metric space, we always consider $\mathcal{F}$ being its Borel $\sigma$-algebra $\B(\Xi).$

Let $\N = \{1, 2, \dots\}$. In this paper, we consider sequences of integrands $\{f, f^\nu:\Xi\times \X\to \RR, \nu\in\N\}$ and probabilities $\{P, P^\nu, \nu\in\N\}$, all defined on $(\Xi, \mathcal{F})$, and examine sufficient conditions for epi-convergence of the resulting sequence of expectation functions $\{E^{P^\nu}[f^\nu]:\X\to \RR, \nu\in\N\}$ to the expectation function $E^P[f]:\X\to \RR$. We also examine when $x^\nu\to x$ implies $\ilim_{\nu\to +\infty} E^{P^\nu}[f^\nu](x^\nu) \geq E^P[f](x)$, which can be viewed as a ``functional'' Fatou's lemma under varying measure.

Section 2 presents preliminary facts. The main theorems appear in Section 3. Section 4 provides motivating applications. An appendix provides an additional proof.

\section{Preliminaries}\label{sec:prelim}

We recall that $\{h^{\nu}:\X\to \RR, \nu\in \N\}$ \textit{epi-converges} to $h:\X\to\RR,$ written as $h = \elim_{\nu \to +\infty} h^{\nu}$, if at each $x\in \X,$ the following two conditions hold:
\begin{itemize}
\item[{\rm(i)}] if $\{x^{\nu}\}_{\nu\in\mathbb{N}}$  converges to $x,$ then $h(x)\le \ilim\limits_{\nu \to +\infty}h^{\nu}(x^{\nu});$
\item[{\rm(ii)}] there exists a sequence $\{x^{\nu}\}_{\nu\in\mathbb{N}}$ convergent to $x$ such that $h(x)=\lim\limits_{\nu \to +\infty}h^{\nu}(x^{\nu}).$
\end{itemize}

For a function $h:\X\to \RR$, its {\em epigraph} is given by $\epi h := \{(x,\alpha)\in \X\times \R~|~h(x) \leq \alpha\}$. The function is {\em lower semicontinuous} (lsc) if $\epi h$ is a closed subset of $\X\times \R$ in the product topology. It is {\em upper semicontinuous} (usc) if $-h$ is lsc. The {\em lower regularization} of $h$, denoted by $\underline{h}:\X\to \RR$, is defined as
\begin{equation*}\label{eq:reg}
\underline{h}(x) := \ilim\limits_{x'\to x}h(x'),\quad x\in\X.
\end{equation*}
Thus, $\underline{h}:\X\to\RR$ is lsc. If $h$ is lsc, then $\underline{h}=h$.

The {\em Pasch-Hausdorff envelope} of $h:\X\to \RR$ with parameter $\kappa\in [0,+\infty)$ is the function $h_\kappa:\X\to \RR$ defined as
\begin{equation}\label{eq:appdefiop1}
h_\kappa(x) := \inf_{x'\in \X} \big\{ h(x') + \kappa d_{\X}(x,x')\big\},\quad x\in \X.
\end{equation}
Trivially, when $0\leq \kappa_1\leq \kappa_2<+\infty$, one has
\begin{equation*}\label{eq:app3}
h_{\kappa_1}(x) \le h_{\kappa_2}(x) \le h(x)~~~\forall x\in \X.
\end{equation*}
The definition of an infimum enables us to conclude that $h_\kappa(x) = \underline{h}_\kappa(x)$ for each $\kappa\in [0,+\infty)$ and each $x\in \X,$ where $\underline{h}_\kappa$ is the Pasch-Hausdorff envelope of $\underline{h}.$
If $\epi h\neq\emptyset$ and there are $\alpha>0$, $\beta\in\R$, and $\bar x\in \X$ such that $h(x) + \alpha d_\X(x,\bar x) + \beta\geq 0$ for all $x\in \X$, then (see for example \cite[Proposition~3.3]{Hess})
\begin{equation}\label{eq:app31}
\forall x\in \X: ~h_{\kappa}(x) \uparrow \underline{h}(x)\mbox{ as } \kappa\to +\infty
\end{equation}
and $h_\alpha(\bar{x})>-\infty$. In fact, for $\kappa> \alpha$, $h_\kappa$ is real-valued and Lipschitz continuous with modulus $\kappa$, i.e.,
\begin{equation}\label{eq:app4}
\big|h_{\kappa}(x) - h_{\kappa}(x')\big| \le \kappa d_{\X}(x,x') ~~~\forall x,x'\in \X.
\end{equation}

Furthermore, if $\{h^\nu:\X\to \RR, \nu\in \N\}$ is a sequence of functions such that for some $\kappa_0\in\mathbb{N}$ and $x_0\in\X,$
\begin{equation}\label{eq:app5}
\ilim_{\nu \to +\infty} h^\nu_{\kappa_0}(x_0) > -\infty,
\end{equation}
then \cite[Proposition~3.4]{Hess} implies that for each $x\in\X$
\begin{equation}\label{eq:app6}
\ilim\limits_{(\nu,y)\to(+\infty,x)} h^\nu(y) = \sup_{\kappa\in\mathbb{N}}\ilim_{\nu \to +\infty} h^\nu_{\kappa}(x).
\end{equation}

For $f:\Xi\times\X\to\RR$, $\xi\in \Xi$, and $\kappa\in [0,+\infty)$, we abuse notation slightly and write $f_\kappa$ for the Pasch-Hausdorff envelope of $f(\xi,\cdot)$, i.e., $f_\kappa:\Xi\times \X\to \RR$ is given by $f_\kappa(\xi,x) = \inf_{x'\in \X}\{ f(\xi,x') + \kappa d_\X(x',x)\}$. In view of  \cite[Propositions 4.2]{Hess},  if $\X$ is a Suslin space and the function $f$ is measurable, then for each $x\in\X$ the function 
$\xi\mapsto f_\kappa(\xi,x)$ is $\hat{\mathcal{F}}$-measurable, where $\hat{\mathcal{F}}$ is the $P$-completion of the sigma-algebra $\mathcal{F}.$ Likewise, $f_\kappa^\nu$ stems from $f^\nu:\Xi\times \X\to \RR$.

For probability spaces   $(\Xi, \mathcal{F},P^\nu),$ $\nu\in\N,$    let us consider the probability spaces  $(\Xi, \hat{\mathcal{F}}^\nu,P^\nu),$  where $\hat{\mathcal{F}}^\nu$ are  $P^\nu$-completions of the  $\sigma$-algebra $\mathcal{F}$.  As explained in the previous paragraph, if  $f^\nu:\Xi\times\X\to\RR$ are  measurable functions, $\nu\in\N,$ then for each $\kappa, \nu\in\N$ and for each $x\in\X,$ the function
 $\xi\mapsto f^\nu_\kappa(\xi,x)$ is $\hat{\mathcal{F}}^\nu  $-measurable on $\Xi$ if $\X$ is a Suslin space.

\section{Main Results}

This section provides two sufficient conditions for epi-convergence of expectation functions. Theorem~\ref{th:general_epi} extends \cite[Theorem 3.1]{Seri3} by considering varying integrands. As in \cite{Seri3}, the main assumptions are expressed in terms of Pasch-Hausdorff envelopes. Theorem~\ref{th:epi} establishes epi-convergence in the setting with weakly converging probabilities and extends \cite[Theorem 1]{Diem} to varying integrands. (While we restrict the attention to  metric spaces, \cite{Diem} deals with Hausdorff topological spaces; see below for a detailed comparison.) Here, we dispense of Pasch-Hausdorff envelopes and instead rely on Fatou's lemmas for weakly convergent probabilities \cite{UFL,FKL2,FKL18}.

In contrast to \cite{Seri3}, \cite{Diem}, and earlier studies, we also bring forth bounds on lower epi-limits, which emerge as useful in their own right as demonstrated in Section 4. These bounds appear as Lemma~\ref{lm:general_lsc} and Theorem~\ref{th:weak-axill}.\\

\begin{lemma}\label{lm:general_lsc}{\rm(parametric Fatou's lemma with varying probabilities)} Let $(\Xi,\mathcal{F})$ be a measurable space, and $\X$ be a Suslin space. For probability measures $\{P, P^\nu, \nu\in \N\}$ on $(\Xi, \mathcal{F})$ and functions $\{f,f^\nu: \Xi\times \X\to \RR, \nu\in \N\}$, suppose that:
\begin{itemize}
\item[{\rm(i)}] $\{f, f^\nu, \nu\in\N\}$ are measurable functions, and $f(\xi,\,\cdot\,)$ is lsc for $P$-a.e. $\xi\in\Xi$;

\item[{\rm{(ii)}}] there exist a countable dense subset $\X_0$ of $\X$ and  an integer $\kappa_0\in\N$ such that
  \begin{equation}\label{eq:ass1}
  \liminf_{\nu \to +\infty} \, E^{P^\nu} \big[
      f_\kappa^\nu\big](x_0) \geq E^{P}\big[f_\kappa\big](x_0)>-\infty
    \end{equation}
for each $x_0\in\X_0$ and each $\kappa\in[\kappa_0,+\infty)$.
\end{itemize}
Then for every $x\in\X$
\[
\ilim\limits_{(\nu,y)\to(+\infty,x)} \, E^{P^\nu}\big[f^\nu\big](y) \ge E^P \big[f\big](x),
\]
with the right-hand side also exceeding $-\infty$ when there exists $\tilde x\in \X$ such that
\[
\ilim\limits_{(\nu,y)\to(+\infty,\tilde x)} \, E^{P^\nu}\big[f^\nu\big](y) < +\infty.
\]
 \end{lemma}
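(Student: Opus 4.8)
The plan is to run the argument through the envelope characterization \eqref{eq:app6} applied to the sequence $h^\nu := E^{P^\nu}[f^\nu]$, $\nu\in\N$, and then to recognize the resulting supremum of Pasch--Hausdorff envelopes as $E^P[f]$. The device connecting the two sides is the pointwise-to-integral inequality
\[
E^{P^\nu}\big[f_\kappa^\nu\big](x)\ \le\ \big(h^\nu\big)_\kappa(x)\qquad\text{for every } x\in\X,\ \kappa\in\N .
\]
It holds because $f_\kappa^\nu(\xi,x)\le f^\nu(\xi,x')+\kappa d_\X(x,x')$ for every $x'$, so that integrating in $\xi$ and taking the infimum over $x'$ bounds the expectation of the integrand's envelope by the envelope of its expectation function; the left-hand integral is meaningful because, under the Suslin hypothesis, $\xi\mapsto f_\kappa^\nu(\xi,x)$ is $\hat{\mathcal F}^\nu$-measurable. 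Specializing to $\kappa=\kappa_0$ and $x=x_0\in\X_0$ and invoking \eqref{eq:ass1} gives $\ilim_{\nu\to+\infty}(h^\nu)_{\kappa_0}(x_0)\ge E^P[f_{\kappa_0}](x_0)>-\infty$, which is precisely condition \eqref{eq:app5} for $\{h^\nu\}$. Hence \eqref{eq:app6} applies and yields, for every $x\in\X$,
\[
\ilim_{(\nu,y)\to(+\infty,x)}E^{P^\nu}\big[f^\nu\big](y)\ =\ \sup_{\kappa\in\N}\ \ilim_{\nu\to+\infty}\big(h^\nu\big)_\kappa(x).
\]

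Next I would bound this supremum below by $E^P[f](x)$. Using $(h^\nu)_\kappa\ge E^{P^\nu}[f_\kappa^\nu]$ once more, it suffices to establish
\[
\ilim_{\nu\to+\infty}E^{P^\nu}\big[f_\kappa^\nu\big](x)\ \ge\ E^P\big[f_\kappa\big](x)\qquad\text{for all } x\in\X,\ \kappa\ge\kappa_0 ,
\]
that is, to upgrade hypothesis \eqref{eq:ass1} from the countable dense set $\X_0$ to all of $\X$. Here density is the key: the triangle inequality gives the one-sided bound $f_\kappa^\nu(\xi,x)\ge f_\kappa^\nu(\xi,x_0)-\kappa d_\X(x,x_0)$, which integrates to the modulus-$\kappa$ Lipschitz estimate $E^{P^\nu}[f_\kappa^\nu](x)\ge E^{P^\nu}[f_\kappa^\nu](x_0)-\kappa d_\X(x,x_0)$ (valid for the large $\nu$ at which the right-hand expectation exceeds $-\infty$), and likewise $E^P[f_\kappa]$ is Lipschitz with modulus $\kappa$ by \eqref{eq:app4}. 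Choosing $x_0\in\X_0$ near $x$ and combining these with \eqref{eq:ass1} produces $\ilim_\nu E^{P^\nu}[f_\kappa^\nu](x)\ge E^P[f_\kappa](x)-2\kappa d_\X(x,x_0)$, and letting $x_0\to x$ along $\X_0$ deletes the error term. Taking the supremum over $\kappa\ge\kappa_0$ then reduces everything to identifying $\sup_{\kappa\ge\kappa_0}E^P[f_\kappa](x)$ with $E^P[f](x)$.

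For that identification I would pass to the limit $\kappa\to+\infty$ pointwise in $\xi$ and then integrate. Since $f_\kappa(\xi,\cdot)$ increases in $\kappa$ and never exceeds $f$, only $\sup_\kappa E^P[f_\kappa](x)\ge E^P[f](x)$ is in doubt. If $E^P[f_{\kappa_0}](x_0)=+\infty$ for some $x_0\in\X_0$, then monotonicity and the Lipschitz estimate force every relevant quantity to be $+\infty$ and the identity is trivial; otherwise $E^P[f_{\kappa_0}(\cdot,x_0)]$ is finite, so $f_{\kappa_0}(\xi,x_0)>-\infty$ for $P$-a.e.\ $\xi$. For each such $\xi$ the finite value $f_{\kappa_0}(\xi,x_0)=\inf_{x'}\{f(\xi,x')+\kappa_0 d_\X(x_0,x')\}$ supplies the affine minorant required in \eqref{eq:app31} (and where instead $f_{\kappa_0}(\xi,x_0)=+\infty$ the convergence $f_\kappa(\xi,x)\uparrow f(\xi,x)=+\infty$ is automatic), whence $f_\kappa(\xi,x)\uparrow\underline{f}(\xi,x)$; by hypothesis (i) one has $\underline{f}(\xi,\cdot)=f(\xi,\cdot)$ for $P$-a.e.\ $\xi$, so $f_\kappa(\xi,x)\uparrow f(\xi,x)$ $P$-almost everywhere. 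The Lipschitz estimate also makes $f_{\kappa_0}(\cdot,x)$ integrable, so the monotone convergence theorem gives $E^P[f_\kappa](x)\uparrow E^P[f](x)$. Chaining the three displays proves the asserted inequality. Finally, $E^P[f](x)\ge E^P[f_{\kappa_0}](x)\ge E^P[f_{\kappa_0}](x_0)-\kappa_0 d_\X(x,x_0)>-\infty$ for any $x_0\in\X_0$, which delivers the ``$>-\infty$'' conclusion; the stated finiteness of the lower epi-limit at some $\tilde x$ adds $E^P[f](\tilde x)<+\infty$, so that $E^P[f]$ is in fact proper.

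I expect the crux to be the pointwise-to-integral passage of the third paragraph. The subtlety is that \eqref{eq:app31} requires a genuine affine lower bound on $f(\xi,\cdot)$ for $P$-almost every fixed $\xi$, not merely in an averaged sense, so one must extract it from the finiteness of $E^P[f_{\kappa_0}(\cdot,x_0)]$ and dispatch the degenerate case $E^P[f_{\kappa_0}](x_0)=+\infty$ by a separate, purely monotone argument. Throughout, the extended-real conventions ($+\infty-\alpha=+\infty$ and $\infty-\infty=+\infty$) must be tracked consistently across the infima, expectations, and monotone limits, and the measurability of $\xi\mapsto f_\kappa^\nu(\xi,x)$ granted by the Suslin hypothesis must be in place before any integral is written. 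By contrast, the envelope characterization \eqref{eq:app6} and the density upgrade are comparatively routine once the key inequality $(h^\nu)_\kappa\ge E^{P^\nu}[f_\kappa^\nu]$ is in hand.
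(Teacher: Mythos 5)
Your proposal is correct, and its skeleton coincides with the paper's own proof in Appendix A: the same key inequality $E^{P^\nu}[f_\kappa^\nu](x)\le (h^\nu)_\kappa(x)$ (the paper's \eqref{eq:l2}), the same verification of \eqref{eq:app5} from \eqref{eq:ass1} so that the Hess-type identity \eqref{eq:app6} applies to $h^\nu=E^{P^\nu}[f^\nu]$, and the same finale via \eqref{eq:app31}, lower semicontinuity of $f(\xi,\cdot)$, and the monotone convergence theorem to identify $\sup_\kappa E^P[f_\kappa](x)$ with $E^P[f](x)$. The one step where you genuinely diverge is the passage from the countable dense set $\X_0$ to all of $\X$: the paper proves that, for fixed $\kappa>\kappa_0$, the function $x\mapsto\ilim_\nu h^\nu_\kappa(x)$ is either identically $+\infty$ or real-valued and Lipschitz with modulus $\kappa$ (via the index-set $I$ and an extracted subsequence $\{\nu_m\}$), shows $E^P[f_\kappa]$ is likewise real-valued Lipschitz, and then extends the inequality \eqref{eq:l3} from $\X_0$ to $\X$ by continuity of both sides; you instead integrate the pointwise Lipschitz estimate $f_\kappa^\nu(\xi,x)\ge f_\kappa^\nu(\xi,x_0)-\kappa d_\X(x,x_0)$ to upgrade hypothesis \eqref{eq:ass1} itself to every $x\in\X$ with an explicit $2\kappa d_\X(x,x_0)$ error killed by density. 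Your route bypasses the paper's dichotomy and subsequence construction entirely, at the cost of the extended-arithmetic bookkeeping you correctly flag (the conventions make integral monotonicity and $E^P[g\pm c]\gtrless E^P[g]\pm c$ valid in all the degenerate cases, so nothing breaks); the paper's route keeps all quantities real-valued before any limit is taken, which makes the monotone-convergence step cleaner but lengthens the proof. A pleasant by-product of your argument is that $E^P[f](x)\ge E^P[f_{\kappa_0}](x_0)-\kappa_0 d_\X(x,x_0)>-\infty$ holds unconditionally from \eqref{eq:ass1}, which is slightly stronger than the paper's statement, where the ``$>-\infty$'' conclusion is asserted only under the existence of the point $\tilde x$ with finite lower epi-limit (that proviso serving, in your reading as in the paper's, to make $E^P[f]$ proper).
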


\begin{proof}
The proof of the lemma rethinks the proof of \cite[Theorem~3.1]{Seri3}; see Appendix~\ref{app}.
\end{proof}

We next turn to epi-convergence.\\

\begin{theorem}\label{th:general_epi}{\rm(epi-convergence of expectation functions)}  Let $(\Xi,\mathcal{F})$ be a measurable space, and $\X$ be a Suslin space.  For probability measures $\{P, P^\nu, \nu\in \N\}$ on $(\Xi, \mathcal{F})$ and functions $\{f,f^\nu: \Xi\times \X\to \RR\}$, suppose that assumptions (i)-(ii) of Lemma~\ref{lm:general_lsc} hold. Moreover, suppose that there is a countable dense subset $\X^0$ of $\X$ such that for each $x^0\in \X^0$,
  \[
  \limsup_{\nu \to +\infty} E^{P^\nu} \big[f^\nu\big](x^0) \leq E^{P}\big[f\big](x^0).
  \]
  Then
  \[
  \elim\limits_{\nu \to +\infty} E^{P^\nu}\big[{f^\nu}\big] = E^P\big[f\big].
  \]
 \end{theorem}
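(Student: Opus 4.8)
The plan is to verify the two conditions in the definition of epi-convergence at each $x \in \X$: the lower bound (i) on liminf along arbitrary sequences, and the existence (ii) of a recovery sequence achieving the limit. Crucially, both halves follow from the two one-sided inequalities we have at our disposal: Lemma~\ref{lm:general_lsc} supplies the lower half, while the new $\limsup$ hypothesis over the dense set $\X^0$ feeds the upper half.

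For the lower half, I would observe that Lemma~\ref{lm:general_lsc} gives exactly
\[
\ilim_{(\nu,y)\to(+\infty,x)} E^{P^\nu}\big[f^\nu\big](y) \ge E^P\big[f\big](x)
\]
for every $x\in\X$. Since any sequence $x^\nu\to x$ is subsumed by the joint lower limit on the left, we immediately get $\ilim_{\nu\to+\infty} E^{P^\nu}[f^\nu](x^\nu) \ge E^P[f](x)$, which is condition (i) of epi-convergence. This half is essentially free once the lemma is invoked.

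The upper half requires constructing, for each $x\in\X$, a sequence $x^\nu\to x$ with $\limsup_\nu E^{P^\nu}[f^\nu](x^\nu) \le E^P[f](x)$; combined with the lower half this forces the limit to equal $E^P[f](x)$, giving condition (ii). The natural strategy is a diagonal argument anchored on the dense set $\X^0$: for points $x^0\in\X^0$ the hypothesis gives $\limsup_\nu E^{P^\nu}[f^\nu](x^0)\le E^P[f](x^0)$ directly, so a recovery sequence can be taken constant in space (or nearly so). For a general $x\in\X$, I would approximate $x$ by a sequence $x_k^0\in\X^0$ with $x_k^0\to x$, show that $\limsup_k E^P[f](x_k^0)$ is controlled by $E^P[f](x)$ (here the Pasch-Hausdorff envelope machinery and the representation \eqref{eq:app6}, together with the monotone convergence \eqref{eq:app31} of envelopes to the lsc regularization, is the right tool, since $E^P[f]$ need not itself be usc), and then diagonalize over $k$ and $\nu$ to extract a single sequence $x^\nu\to x$ along which the value converges to $E^P[f](x)$ from above.

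The main obstacle is the upper half, specifically passing from the dense-set $\limsup$ bound to a genuine recovery sequence for an arbitrary $x$. The difficulty is that $E^P[f]$ is lower semicontinuous only after the lemma is applied and need not be continuous, so one cannot simply take limits of values at nearby dense points; the upper bound must be transported through the envelopes and reconciled with the lsc lower regularization. I would expect the cleanest route to be showing that the limit function, being squeezed between the lsc lower limit from Lemma~\ref{lm:general_lsc} and the dense-set upper bound, must coincide with $E^P[f]$ at points where the latter is already lsc, and then invoking density of $\X^0$ together with a careful diagonal extraction to handle the remaining points; verifying that the diagonal sequence respects both the spatial convergence $x^\nu\to x$ and the value convergence simultaneously is the technical crux.
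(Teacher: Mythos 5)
Your liminf half is correct and is exactly the paper's: Lemma~\ref{lm:general_lsc} gives $\ilim_{(\nu,y)\to(+\infty,x)} E^{P^\nu}[f^\nu](y) \ge E^P[f](x)$, and this subsumes every sequence $x^\nu\to x$. The genuine gap is in the limsup half, and it sits precisely where you place the ``technical crux'' --- but flagging the crux is not resolving it. Your diagonal scheme over $\X^0$ can only ever produce, for a chosen sequence $x^0_k\in\X^0$ converging to $x$, a sequence $x^\nu\to x$ with
\[
\slim_{\nu \to +\infty} E^{P^\nu}\big[f^\nu\big](x^\nu) \le \slim_{k\to+\infty} E^{P}\big[f\big](x^0_k),
\]
so everything reduces to finding $x^0_k\in\X^0\to x$ with $\slim_{k\to+\infty} E^P[f](x^0_k)\le E^P[f](x)$. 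Nothing in the stated hypotheses delivers this: the lemma's machinery shows $E^P[f]=\sup_{\kappa}E^P[f_\kappa]$ with each $E^P[f_\kappa]$ Lipschitz, hence $E^P[f]$ is lsc, and lower semicontinuity gives the \emph{opposite} inequality at a downward jump. Nor can the Pasch--Hausdorff envelopes ``transport'' an upper bound as you hope: since $f_\kappa\le f$, the envelopes furnish minorants only, and \eqref{eq:app6} is a representation of the \emph{lower} epi-limit with no upper analogue available from these assumptions. The obstruction is real, not merely technical. Consider $\X=\Xi=\R$, $P=\delta_0$, $P^\nu=\delta_{1/\nu}$, $f^\nu=f$ with $f(\xi,x)=1-\h\{(\xi,x)=(0,0)\}$. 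Then $f(0,\cdot)$ is lsc, $E^{P^\nu}[f]\equiv 1$, while $E^P[f](x)=1$ for $x\ne 0$ and $E^P[f](0)=0$; assumption (ii) of Lemma~\ref{lm:general_lsc} holds at every point (since $f^\nu_\kappa(1/\nu,\cdot)\equiv 1\ge f_\kappa(0,x)=\min\{1,\kappa|x|\}>-\infty$), and the limsup hypothesis holds on the dense set $\X^0=\mathbb{Q}\setminus\{0\}$; yet at $x=0$ no sequence whatsoever yields $\slim_{\nu\to+\infty} E^{P^\nu}[f](x^\nu)\le 0$. So no argument that, like yours, uses only the values of the expectation functions on a dense set together with abstract lower semicontinuity can complete the recovery-sequence construction.

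Compare this with the paper: its proof of the liminf condition is identical to yours, but for the limit condition it does not attempt a self-contained construction at all --- it invokes the second part of the proof of \cite[Theorem~3.1]{Seri3} ``with minor technical clarifications,'' i.e., exactly the step you could not close is imported wholesale. What makes that imported argument close is a property of the dense set beyond mere density: in effect, that $\X^0$ approximates $E^P[f]$ in value, so that for each $x$ there exist $x^0_k\in\X^0\to x$ with $E^P[f](x^0_k)\to E^P[f](x)$; once that is in hand, your diagonal extraction is exactly right and routine. (The example above shows that some such property must be in force for the conclusion to hold as written, so it is an indispensable ingredient rather than a dispensable refinement.) To repair your proposal you would either have to add and verify this value-approximation property of $\X^0$, or reproduce the relevant portion of the argument from \cite{Seri3}; the squeeze between the lsc lower bound of Lemma~\ref{lm:general_lsc} and the dense-set upper bound, as you set it up, cannot by itself force equality at points outside $\X^0$.
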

\begin{proof}
The liminf-condition in the definition of epi-convergence is immediately furnished by Lemma~\ref{lm:general_lsc}. The limit-condition of the definition is established in the second part of the proof of \cite[Theorem~3.1]{Seri3} with minor technical clarifications.
\end{proof}

Next, we turn to the special case when the probabilities $P^\nu$ converge weakly to $P$, which is common in applications. As is well known, weak convergence may arise for empirical distributions, in the context of central limit theorems, and for ergodic processes. But, weak convergence may also define a topology on spaces of distribution functions for the analysis of distributionally-robust optimization problems \cite{RoysetWets.17b}.

We denote by  $\h B$ the indicator of the event $B,$ that is $\h B=1$ if $B$ is True, and $\h B=0$ if $B$ is False.

\begin{theorem}\label{th:weak-axill}{\rm(parametric Fatou's lemma for weakly convergent probabilities)} Let $\X$ and $\Xi$ be metric spaces.
For probability measures $\{P, P^\nu, \nu\in \N\}$ on $(\Xi, \B(\Xi))$ and functions $\{f,f^\nu: \Xi\times \X\to \RR\}$, suppose that $P^\nu$ converges weakly to $P$ and the following hold for each $x\in \X$:
\begin{itemize}
\item[{\rm(i)}] $\{f(\,\cdot\,,x), f^\nu(\,\cdot\,,x), \nu\in\mathbb{N}\}$ are measurable;

\item[{\rm(ii)}] one has
\begin{equation}\label{eq:ui}
\ilim_{K\to+\infty}\ilim_{(\nu,y)\to(+\infty,x)} \E^{P^\nu}\big[ f^\nu(\xi,y)\h\big\{\xi\,:\, f^\nu(\xi,y)\le-K\big\}\big]=0;
\end{equation}

\item[{\rm(iii)}] for $P\mbox{-a.e. }\xi\in\Xi$ 
\begin{equation}\label{eq:conv1}
\ilim_{(\nu,y,\zeta)\to(+\infty,x,\xi)} f^\nu(\zeta,y) \ge f(\xi,x).
\end{equation}
\end{itemize}
Then for  each $x\in\X$ 
\begin{equation}\label{eq:conv2}
\ilim\limits_{(\nu,y)\to(+\infty,x)}E^{P^\nu}\big[f^\nu\big](y)\ge E^P \big[f\big](x).
\end{equation}
\end{theorem}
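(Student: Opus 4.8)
The plan is to fix $x\in\X$, replace the two-variable lower limit on the left of \eqref{eq:conv2} by an ordinary sequential liminf, and then apply a Fatou lemma for weakly converging probabilities \cite{UFL,FKL2,FKL18}. Set $L:=\ilim_{(\nu,y)\to(+\infty,x)} E^{P^\nu}[f^\nu](y)$. By the definition of the lower limit there is a sequence $(\nu_k,y_k)$ with $\nu_k\to+\infty$ and $y_k\to x$ along which $E^{P^{\nu_k}}[f^{\nu_k}](y_k)\to L$ (if $L=+\infty$ any such sequence works; otherwise a standard diagonal selection produces one). Put $g^k:=f^{\nu_k}(\,\cdot\,,y_k)$ and $Q^k:=P^{\nu_k}$. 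Each $g^k$ is measurable by (i), one has $Q^k\Wc P$ since any subsequence of a weakly converging sequence of probabilities converges weakly to the same limit, and $\E^{Q^k}[g^k]=E^{P^{\nu_k}}[f^{\nu_k}](y_k)\to L$. Thus it suffices to prove $\ilim_k \E^{Q^k}[g^k]\ge E^P[f](x)$.

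First I would verify the lower-limit (semicontinuity) hypothesis of the Fatou lemma for the triple $(\{g^k\},\{Q^k\},f(\,\cdot\,,x))$. Fix $\xi$ in the $P$-full measure set where \eqref{eq:conv1} holds and let $\zeta^k\to\xi$ be arbitrary. Then $(\nu_k,y_k,\zeta^k)\to(+\infty,x,\xi)$, so this is one of the sequences over which the lower limit in \eqref{eq:conv1} is taken, whence
\[
\ilim_k g^k(\zeta^k)=\ilim_k f^{\nu_k}(\zeta^k,y_k)\ge \ilim_{(\nu,y,\zeta)\to(+\infty,x,\xi)} f^\nu(\zeta,y)\ge f(\xi,x).
\]
Next I would verify the negative-tail (asymptotic-uniform-integrability-from-below) hypothesis from \eqref{eq:ui}. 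Writing $A(\nu,y,K):=\E^{P^\nu}\big[f^\nu(\xi,y)\h\{\xi:f^\nu(\xi,y)\le-K\}\big]$, each $A(\nu,y,K)\le0$ for $K\ge0$, and since $(\nu_k,y_k)$ is admissible in the lower limit over $(\nu,y)$,
\[
\ilim_{(\nu,y)\to(+\infty,x)} A(\nu,y,K)\ \le\ \ilim_k A(\nu_k,y_k,K)\ \le\ 0.
\]
Letting $K\to+\infty$ and invoking \eqref{eq:ui} squeezes the middle term to $0$, which is exactly the integrability condition needed along the chosen sequence.

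With both hypotheses verified, the Fatou lemma for weakly converging probabilities gives $L=\ilim_k\E^{Q^k}[g^k]\ge\E^P[f(\,\cdot\,,x)]=E^P[f](x)$, which is \eqref{eq:conv2}; the extended-real-valued statements in \cite{UFL,FKL2,FKL18} also cover the boundary cases $E^P[f](x)\in\{-\infty,+\infty\}$. I expect the only real obstacle to be bookkeeping: confirming that \eqref{eq:ui} is precisely the negative-tail hypothesis of the cited lemma and that the two-variable lower limits in (ii)--(iii) legitimately transfer to the single sequence $(\nu_k,y_k)$. The transfer of \eqref{eq:ui} is the one point that genuinely uses the sign of $A$: the full lower limit over $(\nu,y)$ bounds the subsequential one from below while nonpositivity of $A$ bounds it above by $0$, so the two are squeezed together.
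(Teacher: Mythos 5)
Your proposal follows essentially the same route as the paper's proof: fix $x$, pass from the joint lower limit to a sequence realizing it (the paper selects a full sequence $x^\nu\to x$ with $\ilim_{(\nu,y)\to(+\infty,x)}E^{P^\nu}[f^\nu](y)=\ilim_{\nu\to+\infty}E^{P^\nu}[f^\nu](x^\nu)$, whereas you extract a subsequence $(\nu_k,y_k)$ --- an immaterial difference, since any subsequence of a weakly convergent sequence of probabilities converges weakly to the same limit), set $h^\nu:=f^\nu(\,\cdot\,,x^\nu)$, observe that \eqref{eq:ui} implies the sequential tail condition \eqref{eq:uief} and that \eqref{eq:conv1} transfers to $\ilim_{(\nu,\zeta)\to(+\infty,\xi)}h^\nu(\zeta)\ge f(\xi,x)$, and then apply a Fatou lemma for weakly convergent probabilities. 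Your squeeze argument for the transfer of (ii) --- nonpositivity of the truncated expectations for $K\ge 0$ combined with the fact that the joint lower limit minorizes the subsequential one --- is precisely the detail the paper leaves implicit in the phrase ``\eqref{eq:ui} implies \eqref{eq:uief}'', and it is correct.

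The one genuine gap is your closing assertion that the cited Fatou lemmas ``also cover the boundary cases.'' As the paper itself emphasizes, \cite[Theorem~2.4]{FKL18} is proved under the assumption that each expectation in the inequality has its positive part and/or its negative part finite, whereas this paper works under the convention $\E^P[g(\xi)]=+\infty$ whenever $\E^P[g_+(\xi)]=\E^P[g_-(\xi)]=+\infty$; a direct citation therefore does not dispose of the case where, say, some $\E^{Q^k}[g^k(\xi)]$ is of the form $\infty-\infty$. The paper closes exactly this hole with Theorem~\ref{th:LFatou}, proved via the dichotomy of Lemma~\ref{lm:new}: either $\ilim_{k\to+\infty}\E^{Q^k}[g^k(\xi)]=+\infty$, in which case \eqref{eq:conv2} is trivial under the convention $+\infty-\alpha=+\infty$, or the tail condition \eqref{eq:uief} bounds $\slim_{k\to+\infty}\E^{Q^k}[g_-^k(\xi)]$, which forces $\ilim_{k\to+\infty}\E^{Q^k}[g_+^k(\xi)]<+\infty$ and hence $\E^P\big[\big(\ilim_{(k,\zeta)\to(+\infty,\xi)}g^k(\zeta)\big)_+\big]<+\infty$, putting everything back in the regime where \cite[Theorem~2.4]{FKL18} applies classically. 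Inserting this short dichotomy argument (or simply invoking Theorem~\ref{th:LFatou} in place of the literature versions) completes your proof; nothing else needs to change.
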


We observe that the first lower limit in \eqref{eq:ui} can be replaced by a limit because the subsequent expression is nondecreasing in $K$. The same observation applies to formula \eqref{eq:uief} below.

Before proving Theorem~\ref{th:weak-axill}, we formulate and prove Theorem~\ref{th:LFatou} which is a particular case of Theorem~\ref{th:weak-axill}  when $\X$ is a singleton. Theorem~\ref{th:LFatou} is an extension of Fatou's lemma for weakly converging probabilities and unbounded functions \cite[Theorem~2.4]{FKL18} to the convention $\E^P [g(\xi)]=+\infty$ if $\E^P[ g_+(\xi)]=\E^P[ g_-(\xi)]=+\infty.$ In \cite[Theorem~2.4]{FKL18}, the generalized Fatou's inequality \eqref{eq:new1} is proved under the assumption that each expectation in the inequality has its positive part and/or its negative part being finite. Another formal difference between Theorem~\ref{th:LFatou}  and \cite[Theorem~2.4]{FKL18} is that the former deals with probability measures while the latter deals with finite measures.  However, the case of finite measures follows from the case of probability measures by normalization, if the limiting finite measure of the entire space $\Xi$ is positive. If the limiting finite  measure is $0$, then the right-hand side of inequality \eqref{eq:new1} is $0$, and assumption \eqref{eq:uief} implies that the lower limit on the left-hand side of \eqref{eq:new1} is nonnegative. Thus, the extension of Theorem~\ref{th:LFatou} from probabilities to finite measures is routine.

\begin{theorem}\label{th:LFatou}{\rm(extended Fatou's lemma for weakly convergent probabilities; cp. \cite[Theorem~2.4]{FKL18} )}
Let $\Xi$ be a metric space, $\{P^\nu\}_{\nu\in\N}$ be a sequence of probabilities on $\Xi$ converging weakly to a probability $P$ on  $\Xi,$ and
	$\{ h^\nu\}_{\nu\in\N}$ be a sequence of measurable
	$\RR$-valued functions on $\Xi$ such that
\begin{equation}\label{eq:uief}
\ilim_{K\to+\infty}\ilim_{\nu \to +\infty} \E^{P^\nu}\big[  h^\nu(\xi)\h\big\{\xi\,:\, h^\nu(\xi)\le-K\big\}\big]=0.
\end{equation}
Then
\begin{equation}\label{eq:new1}
\ilim\limits_{\nu \to +\infty} \E^{P^\nu}\big[h^\nu(\xi)\big]\ge  \E^P\big[ \ilim_{(\nu,\zeta)\to(+\infty,\xi)} h^\nu(\zeta) \big].
\end{equation}
\end{theorem}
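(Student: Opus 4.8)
The plan is to reduce the assertion to the weakly-convergent Fatou lemma already available in \cite[Theorem~2.4]{FKL18}, whose only gap relative to our statement is the degenerate expectation $\E^P[h]=\E^P[h_+]-\E^P[h_-]$ with both parts $+\infty$, which we have declared equal to $+\infty$. The device that closes this gap, and in fact treats every case uniformly, is truncation from below. Writing $h(\xi):=\ilim_{(\nu,\zeta)\to(+\infty,\xi)}h^\nu(\zeta)$ and, for $K\in\N$, $g^{\nu,K}:=\max\{h^\nu,-K\}$, I would note that $g^{\nu,K}\ge -K$, so $\E^{P^\nu}[(g^{\nu,K})_-]\le K<+\infty$ and the tail condition \eqref{eq:uief} holds trivially for $\{g^{\nu,K}\}_\nu$ (its integrand vanishes once the truncation level exceeds $K$). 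Hence \cite[Theorem~2.4]{FKL18} applies to $\{g^{\nu,K}\}_\nu$ with no side condition, and since $g^{\nu,K}(\zeta)\ge h^\nu(\zeta)$ and $g^{\nu,K}(\zeta)\ge -K$ give $\ilim_{(\nu,\zeta)\to(+\infty,\xi)}g^{\nu,K}(\zeta)\ge\max\{h(\xi),-K\}$, I obtain
\begin{equation*}
\ilim_{\nu\to+\infty}\E^{P^\nu}\big[g^{\nu,K}\big]\ \ge\ \E^P\big[\max\{h,-K\}\big].
\end{equation*}

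Next I relate $h^\nu$ to its truncation through the pointwise identity $h^\nu=\max\{h^\nu,-K\}+\min\{h^\nu+K,0\}$ together with the pointwise bound $\min\{h^\nu+K,0\}\ge h^\nu\,\h\{h^\nu\le -K\}$, which yields
\begin{equation*}
\E^{P^\nu}\big[h^\nu\big]\ \ge\ \E^{P^\nu}\big[g^{\nu,K}\big] + \E^{P^\nu}\big[h^\nu\,\h\{\xi:h^\nu(\xi)\le -K\}\big].
\end{equation*}
This is where the convention must be watched: the first summand lies in $[-K,+\infty]$ and the second in $[-\infty,0]$, so the only delicate configuration is $(+\infty)+(-\infty)$, which the convention $+\infty-\alpha=+\infty$ (now with $\alpha=+\infty$) resolves as $+\infty$; one checks that $\E^{P^\nu}[h^\nu]=+\infty$ exactly in that configuration, so the displayed inequality is in fact correct. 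I would justify it by a short case split on finiteness of $\E^{P^\nu}[(h^\nu)_+]$ and $\E^{P^\nu}[(h^\nu)_-]$, each case being immediate once one verifies that the split of the integral of the right-hand side into two expectations respects the paper's convention.

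Taking $\ilim_\nu$ of the last display and using superadditivity of the lower limit then gives
\begin{equation*}
\ilim_{\nu\to+\infty}\E^{P^\nu}\big[h^\nu\big]\ \ge\ \E^P\big[\max\{h,-K\}\big]+\phi(K),\qquad \phi(K):=\ilim_{\nu\to+\infty}\E^{P^\nu}\big[h^\nu\,\h\{h^\nu\le -K\}\big].
\end{equation*}
Superadditivity is legitimate here because, by \eqref{eq:uief}, the quantity $\phi(K)$ is nondecreasing in $K$ with $\phi(K)\uparrow 0$, hence finite for all large $K$, so the sum on the right is never an indeterminate $\infty-\infty$.

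Finally I let $K\to+\infty$. Applying ordinary (increasing) monotone convergence to $\min\{h_-,K\}\uparrow h_-$ gives $\E^P[\max\{h,-K\}]=\E^P[h_+]-\E^P[\min\{h_-,K\}]\to\E^P[h]$ in every case: when $\E^P[h_+]<+\infty$ the limit is the genuine value $\E^P[h_+]-\E^P[h_-]$ (possibly $-\infty$), while when $\E^P[h_+]=+\infty$ the left-hand side is identically $+\infty$, matching the convention value of $\E^P[h]$. Combining this with $\phi(K)\to 0$ yields \eqref{eq:new1}. I expect the main obstacle to be purely bookkeeping: confirming that the two-term decomposition and the subsequent passage to the limit never produce a genuine $\infty-\infty$ ambiguity but are always pinned to $+\infty$ on both sides of the inequality, so that invoking \cite[Theorem~2.4]{FKL18} only for the bounded-below (and therefore non-degenerate) truncations $g^{\nu,K}$ suffices to recover the full statement.
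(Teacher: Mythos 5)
Your proof is correct, and it takes a genuinely different route from the paper's. The paper proceeds by dichotomy: its Lemma~\ref{lm:new} shows, using \eqref{eq:uief} to bound $\slim_{\nu\to+\infty}\E^{P^\nu}[h^\nu_-(\xi)]$ and then applying \cite[Theorem~2.4]{FKL18} to the positive parts $h^\nu_+$, that either $\ilim_{\nu\to+\infty}\E^{P^\nu}[h^\nu(\xi)]=+\infty$ (making \eqref{eq:new1} trivial under the convention) or $\E^P\big[\big(\ilim_{(\nu,\zeta)\to(+\infty,\xi)}h^\nu(\zeta)\big)_+\big]<+\infty$; in the latter case the right-hand side of \eqref{eq:new1} is non-degenerate and \cite[Theorem~2.4]{FKL18} is applied directly to the original sequence $\{h^\nu\}$. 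You never apply the cited theorem to the original sequence: you invoke it only for the bounded-below truncations $\max\{h^\nu,-K\}$, where uniform integrability from below and non-degeneracy of all expectations are automatic, and you recover \eqref{eq:new1} via the exact decomposition $h^\nu=\max\{h^\nu,-K\}+\min\{h^\nu+K,0\}$, the minorant $\min\{h^\nu+K,0\}\ge h^\nu\,\h\{\xi: h^\nu(\xi)\le-K\}$, superadditivity of the lower limit (legitimate because \eqref{eq:uief} makes your $\phi(K)$ finite for large $K$), and monotone convergence as $K\to+\infty$ (with $h(\xi):=\ilim_{(\nu,\zeta)\to(+\infty,\xi)}h^\nu(\zeta)$ in your notation). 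Your bookkeeping at the delicate points checks out: in the only ambiguous configuration $(+\infty)+(-\infty)$ one indeed has $\E^{P^\nu}[h^\nu_+(\xi)]=\E^{P^\nu}[h^\nu_-(\xi)]=+\infty$, so $\E^{P^\nu}[h^\nu(\xi)]=+\infty$ and the per-$\nu$ inequality holds (your word ``exactly'' slightly overstates this---$\E^{P^\nu}[h^\nu(\xi)]=+\infty$ also occurs with finite negative part---but only the stated direction is needed); and your terminal limit $\E^P[\max\{h,-K\}]\to\E^P[h]$ correctly reproduces the convention value $+\infty$ when $\E^P[h_+]=+\infty$, which is precisely the degenerate case the paper delegates to Lemma~\ref{lm:new}. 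The trade-off: the paper's argument is shorter and its auxiliary Lemma~\ref{lm:new} is reused for Corollary~\ref{th:LFC}, whereas your truncation scheme is more self-contained on the convention bookkeeping, confining every appeal to \cite[Theorem~2.4]{FKL18} to the bounded-below regime where no $\infty-\infty$ degeneracy can arise.
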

The proof of Theorem~\ref{th:LFatou} uses the following lemma stating additional properties of expectations of functions satisfying assumption \eqref{eq:uief}, which amounts to asymptotic uniform integrability from below.

\begin{lemma}\label{lm:new}
Under the assumptions of Theorem~\ref{th:LFatou}, either
\[
\ilim\limits_{\nu \to +\infty} \E^{P^\nu}\big[h^\nu(\xi)\big] =+\infty~~~ \mbox{ or } ~~~\E^P\big[ \big(\ilim\limits_{(\nu,\zeta)\to(+\infty,\xi)} h^\nu(\zeta)\big)_+ \big] <+\infty.
\]
\end{lemma}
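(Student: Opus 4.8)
The plan is to establish the dichotomy by a truncation argument that reduces the claim to the classical Fatou lemma for weakly convergent probabilities applied to functions bounded from below. Throughout, abbreviate $g(\xi) := \ilim_{(\nu,\zeta)\to(+\infty,\xi)} h^\nu(\zeta)$ for the limiting integrand appearing in \eqref{eq:new1}, and set $L := \ilim_{\nu\to+\infty}\E^{P^\nu}[h^\nu(\xi)]$. If $L = +\infty$, the first alternative holds and there is nothing to prove, so I would assume $L < +\infty$ and aim to show $\E^P[g_+] < +\infty$. (I take for granted, as in the statement of the lemma, that $g$ is measurable with respect to the relevant completion so that $\E^P[g_+]$ is defined.)

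First I would fix $\epsilon>0$ and exploit the asymptotic uniform integrability from below \eqref{eq:uief}: since the inner lower limit is nondecreasing in $K$ and tends to $0$, there exists $M\in\N$ with $\ilim_{\nu\to+\infty}\E^{P^\nu}[h^\nu(\xi)\h\{\xi:h^\nu(\xi)\le -M\}]\ge -\epsilon$, i.e. the negative tail contributes at most $\epsilon$ in the limit. Next I would introduce the lower truncations $h^{\nu,M} := \max\{h^\nu,-M\}$, which are measurable, bounded below by $-M$, and therefore have finite negative part under every $P^\nu$. The elementary pointwise bound $h^{\nu,M}\le h^\nu\,\h\{\xi:h^\nu(\xi)> -M\}$ gives, after splitting off the tail, the inequality $\E^{P^\nu}[h^{\nu,M}]\le \E^{P^\nu}[h^\nu] - \E^{P^\nu}[h^\nu\,\h\{\xi:h^\nu(\xi)\le -M\}]$.

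To pass to the limit cleanly I would select a subsequence (not relabeled) along which $\E^{P^\nu}[h^\nu]\to L$; along it $P^\nu$ still converges weakly to $P$ and eventually $\E^{P^\nu}[h^\nu]<+\infty$, so that $(h^\nu)_+$ is integrable and the preceding splitting is unambiguous. Taking the lower limit along this subsequence and invoking the choice of $M$ yields $\ilim_{\nu\to+\infty} \E^{P^\nu}[h^{\nu,M}] \le L + \epsilon$. Because each $h^{\nu,M}\ge -M$, condition \eqref{eq:uief} holds trivially for the truncated sequence (the defining set is empty once $K>M$), so the classical Fatou lemma for weakly convergent probabilities \cite[Theorem~2.4]{FKL18} applies to $\{h^{\nu,M}\}$ and gives $\E^P[\ilim_{(\nu,\zeta)\to(+\infty,\xi)} h^{\nu,M}(\zeta)]\le \ilim_{\nu\to+\infty}\E^{P^\nu}[h^{\nu,M}]$; crucially, this is the result from the literature rather than its extension Theorem~\ref{th:LFatou}, which keeps the argument free of circularity. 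Since $\max\{h^\nu,-M\}\ge h^\nu$ and $\ge -M$, and a subsequence lower limit dominates the full one, $\ilim_{(\nu,\zeta)} h^{\nu,M}(\zeta)\ge \max\{g(\xi),-M\}$, whence $\E^P[\max\{g,-M\}]\le L+\epsilon<+\infty$. Finally, the pointwise identity $\max\{g,-M\} = g_+ - \min\{g_-,M\}$ together with $0\le \E^P[\min\{g_-,M\}]\le M$ forces $\E^P[g_+]\le L+\epsilon+M<+\infty$, which is the second alternative.

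The main obstacle I anticipate is the bookkeeping around the extended-real conventions: the splitting $\E^{P^\nu}[h^\nu\,\h\{h^\nu>-M\}]=\E^{P^\nu}[h^\nu]-\E^{P^\nu}[h^\nu\,\h\{h^\nu\le-M\}]$ is only unambiguous once $(h^\nu)_+$ is integrable, which is precisely why restricting to the subsequence where $\E^{P^\nu}[h^\nu]$ is finite is essential rather than cosmetic. A secondary point requiring care is the interchange of $\max\{\,\cdot\,,-M\}$ with $\ilim$; only the easy inequality $\ilim_{(\nu,\zeta)}\max\{h^\nu,-M\}\ge\max\{g,-M\}$ is needed, and it follows from the two dominations above, so no delicate equality of lower limits must be proved.
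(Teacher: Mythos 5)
Your proof is correct, and it takes a genuinely different route than the paper's, although both ultimately reduce the claim to the external Fatou lemma \cite[Theorem~2.4]{FKL18} applied to a modified sequence whose negative parts are under control. The paper works with the positive/negative-part decomposition: it uses \eqref{eq:uief} with a single threshold $K_0$ to show $\slim_{\nu\to+\infty}\E^{P^\nu}\big[h^\nu_-(\xi)\big]<+\infty$, deduces from the superadditivity inequality \eqref{eq:new3} that $\ilim_{\nu\to+\infty}\E^{P^\nu}\big[h^\nu_+(\xi)\big]<+\infty$ whenever the first alternative fails, applies \cite[Theorem~2.4]{FKL18} to the nonnegative functions $h^\nu_+$, and finishes with the interchange identity $\big(\ilim_{k}a^k\big)_+=\ilim_{k}a^k_+$. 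You instead truncate from below, $h^{\nu,M}=\max\{h^\nu,-M\}$, pass to a subsequence realizing $L$, apply the same external result to the truncations (for which the lower uniform integrability hypothesis is vacuous once $K>M$), and finish with the identity $\max\{g,-M\}=g_+-\min\{g_-,M\}$. Your route is subsequence-dependent and requires the eventual-finiteness bookkeeping for the tail expectations, which you handle correctly: along the subsequence the choice of $M$ forces $\E^{P^\nu}\big[h^\nu(\xi)\h\{\xi:h^\nu(\xi)\le-M\}\big]$ to be eventually finite, so the splitting is legitimate, and the fact that $\E^{P^\nu}[h^\nu(\xi)]<+\infty$ implies integrability of $h^\nu_+$ is indeed a consequence of the paper's convention that the integral is $+\infty$ whenever the positive part integrates to $+\infty$. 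In exchange you obtain the quantitative bound $\E^P[g_+]\le L+\epsilon+M$, which the paper's qualitative argument does not produce; the paper's route is shorter and avoids subsequences. One minor point: you implicitly treat $L$ as real when $L<+\infty$. This is harmless because \eqref{eq:uief} itself rules out $L=-\infty$ (from the pointwise bound $h^\nu\ge h^\nu\,\h\{\xi: h^\nu(\xi)\le-K_0\}-K_0$ and the finiteness of the tail liminf at some $K_0$), but it is worth a sentence; it is not a gap.
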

\begin{proof}
In view of \eqref{eq:uief},
\[
\ilim\limits_{\nu \to +\infty} \E^{P^\nu}\big[  h^\nu(\xi)\h\big\{\xi\,:\, h^\nu(\xi)\le-K_0\big\}\big]>-\infty
\]
for some real $K_0>0$. Therefore,
\[
\begin{aligned}
&\slim\limits_{\nu \to +\infty} \E^{P^\nu}\big[h_-^\nu(\xi)\big]
\\ &\le \slim\limits_{\nu \to +\infty} \E^{P^\nu}\big[ h_-^\nu(\xi)\h\big\{\xi\,:\, h_-^\nu(\xi)\ge K_0\big\}\big] + \slim\limits_{\nu \to +\infty} \E^{P^\nu}\big[ h_-^\nu(\xi)\h\big\{\xi\,:\, h_-^\nu(\xi)< K_0\big\}\big]\\
&\le -\ilim\limits_{\nu \to +\infty} \E^{P^\nu}\big[ h^\nu(\xi)\h\big\{\xi\,:\, h^\nu(\xi)\le -K_0\big\}\big] + K_0<+\infty.
\end{aligned}
\]
Thus, $\slim_{\nu \to +\infty} \E^{P^\nu}\big[h_-^\nu(\xi)\big] <+\infty,$ and this inequality implies
\begin{equation}\label{eq:new3}
\ilim\limits_{\nu \to +\infty} \E^{P^\nu}\big[h_+^\nu(\xi)\big]\le \slim\limits_{\nu \to +\infty} \E^{P^\nu}\big[h_-^\nu(\xi)\big] + \ilim\limits_{\nu \to +\infty} \E^{P^\nu}\big[h^\nu(\xi)\big].
\end{equation}
Therefore, due to \eqref{eq:new3}, if $\ilim_{\nu \to +\infty} \E^{P^\nu}\big[h^\nu(\xi)\big] <+\infty$, then
$\ilim_{\nu \to +\infty} \E^{P^\nu}\big[h_+^\nu(\xi)\big] < +\infty.$ Thus, according to Fatou's lemma for weakly convergent probabilities \cite[Theorem~2.4]{FKL18},
\[
\E^P\big[ \ilim_{(\nu,\zeta)\to(+\infty,\xi)} h_+^\nu(\zeta) \big]
\le \ilim\limits_{\nu \to +\infty} \E^{P^\nu}\big[h_+^\nu(\xi)\big]<
+\infty.
\]
Therefore,
\[
\E^P\big[ \big(\ilim\limits_{(\nu,\zeta)\to(+\infty,\xi)} h^\nu(\zeta)\big)_+ \big] = \E^P\big[ \ilim\limits_{(\nu,\zeta)\to(+\infty,\xi)} h_+^\nu(\zeta) \big]<+\infty
\]
because
\[
\big(\ilim\limits_{k\to\infty}a^k\big)_+=\ilim\limits_{k\to\infty}a^k_+
\]
for each sequence $\{a^k\}_{k\in\mathbb{N}}\subset\RR$ since
\[
(\inf\limits_{k\ge n}a^k\big)_+=\inf\limits_{k\ge n}a^k_+
\]
for all $n\in\N$ and, therefore,
\[
(\sup\limits_{n\in\N}\inf\limits_{k\ge n}a^k\big)_+=\sup\limits_{n\in\N}\inf\limits_{k\ge n}a^k_+.
\]
\end{proof}

\begin{proof}[Proof of Theorem~\ref{th:LFatou}]
If $\ilim_{\nu \to +\infty} \E^{P^\nu}\big[h^\nu(\xi)\big] =+\infty$ then  \eqref{eq:new1} holds due to the convention 
$+\infty - \alpha = +\infty$ for each $\alpha \in \RR.$
Otherwise, if $\ilim_{\nu \to +\infty} \E^{P^\nu}\big[h^\nu(\xi)\big] <+\infty,$ then Lemma~\ref{lm:new} implies that
\begin{equation}\label{eq:new2}
\E^P\big[ \big(\ilim\limits_{(\nu,\zeta)\to(+\infty,\xi)} h^\nu(\zeta)\big)_+ \big]<+\infty.
\end{equation}
Fatou's lemma for weakly converging probabilities \cite[Theorem~2.4]{FKL18}, \eqref{eq:uief}, and \eqref{eq:new2} imply \eqref{eq:new1}.
\end{proof}

Let us consider the following corollary to Theorem~\ref{th:LFatou}.
\begin{corollary}\label{th:LFC}
Let $\Xi$ be a metric space, $\{P^\nu\}_{\nu\in\N}$ be a sequence of probabilities on $\Xi$ converging weakly to a probability $P$ on  $\Xi,$ and
	$\{ h^\nu\}_{\nu\in\N}$ be a sequence of measurable
	$\RR$-valued functions on $\Xi$ such that
\begin{equation}\label{eq:uiefc}
\slim_{K\to+\infty}\slim_{\nu \to +\infty} \E^{P^\nu}\big[  h^\nu(\xi)\h\big\{\xi\,:\, h^\nu(\xi)\ge K\big\}\big]=0.
\end{equation}
Then
\begin{equation}\label{eq:new1c}
\slim\limits_{\nu \to +\infty} \E^{P^\nu}\big[h^\nu(\xi)\big]\le  \E^P\big[ \slim_{(\nu,\zeta)\to(+\infty,\xi)} h^\nu(\zeta) \big].
\end{equation}
\end{corollary}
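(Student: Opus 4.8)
The plan is to derive the corollary from Theorem~\ref{th:LFatou} by a sign change, applying it to the negated sequence $g^\nu := -h^\nu$. First I would verify that hypothesis \eqref{eq:uief} for $\{g^\nu\}$ is exactly hypothesis \eqref{eq:uiefc} for $\{h^\nu\}$. Since $\{\xi : g^\nu(\xi)\le -K\} = \{\xi : h^\nu(\xi)\ge K\}$ and $g^\nu = -h^\nu$, one has $\E^{P^\nu}[g^\nu\h\{\xi : g^\nu(\xi)\le -K\}] = -\E^{P^\nu}[h^\nu\h\{\xi : h^\nu(\xi)\ge K\}]$, and using $\ilim_\nu(-a_\nu) = -\slim_\nu a_\nu$ for the inner limit together with the same identity for the outer limit in $K$, condition \eqref{eq:uief} for $\{g^\nu\}$ reduces to $\slim_K\slim_\nu\E^{P^\nu}[h^\nu\h\{\xi : h^\nu(\xi)\ge K\}] = 0$, i.e.\ \eqref{eq:uiefc}. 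Applying Theorem~\ref{th:LFatou} to $\{g^\nu\}$ and using $\ilim_{(\nu,\zeta)\to(+\infty,\xi)}(-h^\nu(\zeta)) = -\slim_{(\nu,\zeta)\to(+\infty,\xi)} h^\nu(\zeta)$ then yields
\[
\ilim_{\nu\to+\infty}\E^{P^\nu}\big[-h^\nu(\xi)\big] \ge \E^P\big[-\slim_{(\nu,\zeta)\to(+\infty,\xi)} h^\nu(\zeta)\big].
\]

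The remaining and only delicate step is to negate both sides to obtain \eqref{eq:new1c}, and I expect this to be the main obstacle. Under the paper's conventions ($+\infty - \alpha = +\infty$) the antisymmetry $\E^Q[-g] = -\E^Q[g]$ fails precisely when both $\E^Q[g_+]$ and $\E^Q[g_-]$ equal $+\infty$; in general only $\E^Q[g] \ge -\E^Q[-g]$ holds, which is the wrong direction for the desired upper bound. I would remove this difficulty on the left-hand side with an analog of the first computation in the proof of Lemma~\ref{lm:new}: from $\E^{P^\nu}[(h^\nu)_+] \le \E^{P^\nu}[h^\nu\h\{\xi : h^\nu(\xi)\ge K_0\}] + K_0$ and \eqref{eq:uiefc} one chooses $K_0$ so that the $\slim_\nu$ of the first term is finite, giving $\slim_\nu\E^{P^\nu}[(h^\nu)_+] < +\infty$. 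Hence $\E^{P^\nu}[(h^\nu)_+] < +\infty$ for all sufficiently large $\nu$, which excludes the degenerate case, so $\E^{P^\nu}[h^\nu] = -\E^{P^\nu}[-h^\nu]$ for all large $\nu$; since $\slim_\nu$ ignores finitely many terms, $\slim_\nu\E^{P^\nu}[h^\nu] = -\ilim_\nu\E^{P^\nu}[-h^\nu]$.

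On the right-hand side, if $\E^P[\slim_{(\nu,\zeta)\to(+\infty,\xi)} h^\nu(\zeta)] = +\infty$ then \eqref{eq:new1c} is trivial; otherwise that expectation is finite or $-\infty$, so by the convention $\E^P[(\slim_{(\nu,\zeta)\to(+\infty,\xi)} h^\nu(\zeta))_+] < +\infty$, and then $\E^P[-\slim_{(\nu,\zeta)\to(+\infty,\xi)} h^\nu(\zeta)] = -\E^P[\slim_{(\nu,\zeta)\to(+\infty,\xi)} h^\nu(\zeta)]$ with no convention trouble. Combining these identities with the displayed inequality gives $\slim_\nu\E^{P^\nu}[h^\nu] \le \E^P[\slim_{(\nu,\zeta)\to(+\infty,\xi)} h^\nu(\zeta)]$, which is \eqref{eq:new1c}. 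The argument is short once the convention bookkeeping is settled, so I would present the negation symmetry carefully and point to Lemma~\ref{lm:new} for the routine tail estimate.
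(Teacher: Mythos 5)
Your proposal is correct and takes essentially the same route as the paper: the paper likewise proves the corollary by the sign change $h^\nu\mapsto -h^\nu$, disposing of a trivial case and invoking Lemma~\ref{lm:new} applied to $\{-h^\nu\}_{\nu\in\N}$ (your tail estimate showing $\slim_{\nu}\E^{P^\nu}[(h^\nu)_+]<+\infty$ is exactly that lemma's first computation in mirrored form) before appealing to the Fatou lemma for weakly converging probabilities. The only organizational difference is that you use the statement of Theorem~\ref{th:LFatou} as a black box and carry out the convention bookkeeping for negating both sides explicitly, whereas the paper re-runs that theorem's proof ``with minor technical modifications''; your explicit handling of the conventions is precisely what those modifications amount to.
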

\begin{proof}
The proof repeats the proof of Theorem~\ref{th:LFatou} with minor technical modifications.
If \[
\slim\limits_{\nu \to +\infty} \E^{P^\nu}\big[h^\nu(\xi)\big] =-\infty,
\]
then  \eqref{eq:new1c} holds trivially. Otherwise, if
\[
\slim\limits_{\nu \to +\infty} \E^{P^\nu}\big[h^\nu(\xi)\big] >-\infty,
\]
then Lemma~\ref{lm:new} applied to the sequence $\{ -h^\nu\}_{\nu\in\N}$ implies that
\begin{equation}\label{eq:new2c}
\E^P\big[ \big(\slim\limits_{(\nu,\zeta)\to(+\infty,\xi)} h^\nu(\zeta)\big)_- \big]>-\infty.
\end{equation}
Fatou's lemma for weakly converging probabilities \cite[Theorem~2.4]{FKL18}, \eqref{eq:uiefc}, and \eqref{eq:new2c} imply \eqref{eq:new1c}.
\end{proof}
\begin{proof}[Proof of Theorem~\ref{th:weak-axill}]
Fix an arbitrary $x\in\X.$ Consider a sequence $\{x^\nu\}_{\nu\in\mathbb{N}}$ converging to $x$ such that
\begin{equation}\label{eq:weak1}
\ilim\limits_{(\nu,y)\to(+\infty,x)} E^{P^\nu}\big[f^\nu\big](y)=\ilim\limits_{(\nu,x^\nu)\to(+\infty,x)} E^{P^\nu}\big[f^\nu\big](x^\nu) = \ilim\limits_{\nu \to +\infty} E^{P^\nu}\big[f^\nu\big](x^\nu).
\end{equation}
Let us set $h^\nu(\,\cdot\,):=f^\nu(\,\cdot\,,x^\nu),$ $\nu\in\mathbb{N}.$ We observe that \eqref{eq:ui} implies \eqref{eq:uief}.
Therefore, since
\[
\ilim\limits_{(\nu,\zeta)\to(+\infty,\xi)} h^\nu(\zeta)\ge \ilim\limits_{(\nu,y,\zeta)\to(+\infty,x,\xi)} f^\nu(\zeta,y),
\]
\eqref{eq:weak1}, Theorem~\ref{th:LFatou},  and \eqref{eq:conv1} imply \eqref{eq:conv2}.
\end{proof}

Assumption (iii) in Theorem~\ref{th:weak-axill} relates to lsc as follows. For each $x\in\X$, suppose that the functions $\{f^\nu(\,\cdot\,,x), \nu\in\mathbb{N}\}$ are {\em equi-lsc}, i.e., for $P$-a.e. $\xi\in\Xi,$ and for each $\varepsilon > 0$ there exists $\delta>0$ such that
\begin{equation*}
f^\nu(\zeta,x) > f^\nu(\xi,x) - \varepsilon \qquad \text{for all}\  \zeta\in \ball_\Xi(\xi;\delta) \text{ and for sufficiently large } \nu\in\mathbb{N},
\end{equation*}
where $\ball_\Xi(\xi;\delta):=\{\zeta\in\Xi\,:\,d_\Xi(\zeta,\xi)<\delta\}.$ Then \eqref{eq:conv1} can be replaced with {\em lower semiconvergence} of $\{f^\nu(\,\cdot\,,x), \nu\in\mathbb{N}\}$ to $f(\,\cdot\,,x)$ in probability $P,$ that is, for each $\varepsilon > 0$, one has
	\begin{align}\label{eq:measure}
		P \big(\{ \xi \in\Xi : f^\nu(\xi,x) \leq f(\xi,x) - \varepsilon \}\big) \to 0 ~~\text{ as } \nu\to +\infty.
		\end{align}
This follows from \cite[Theorem~4.1]{FKL2}. A sufficient condition for \eqref{eq:measure} is
\begin{equation}\label{eq:poinwise}
\ilim_{\nu \to +\infty} f^\nu(\xi,x) \ge f(\xi,x) \quad \mbox{for }P\mbox{-a.e. }\xi\in\Xi.
\end{equation}

Moreover, \eqref{eq:measure} implies that
\begin{equation}\label{eq:poinwise_subsequence}
\ilim_{k\to+\infty} f^{\nu_\kappa}(\xi,x) \ge f(\xi,x) \quad \mbox{for }P\mbox{-a.e. }\xi\in\Xi,
\end{equation}
for a subsequence $\{ f^{\nu_\kappa}\}_{k\in\mathbb{N}}\subset\{f^\nu\}_{\nu\in\mathbb{N}}.$\\

According to \cite[Theorem~2.6]{FKL18}, a sufficient condition for \eqref{eq:ui} is that
for each $\bar{x}\in\X$ there exists $\rho >0$ and a sequence of measurable
$\overline{\R}$-valued functions $\{g^\nu\}_{\nu\in\mathbb{N}}$ on $\Xi$ such that
\[
\min\Big\{0,\inf\limits_{x\in\ball_\X(\bar{x};\rho)} f^\nu(\xi,x)\Big\}\ge g^\nu (\xi)
\]
for sufficiently large $\nu\in\mathbb{N}$ and $\xi\in\Xi,$	where $\ball_\X(\bar{x};\rho):=\{x\in\X:d_\X(\bar{x},x)\le\rho\}$, and
\begin{align}\label{eq:sw2aaa:var}
		-\infty< \E^P\big[\slim_{(\nu,\zeta)\to (+\infty,\xi)} g^\nu(\zeta)\big]
		\le\ilim_{\nu \to +\infty} \E^{P^\nu}\big[ g^\nu(\xi)\big].
	\end{align}
As demonstrated in \cite[Example~3.3]{FKL18}, condition \eqref{eq:ui} cannot be replaced in Theorem~\ref{th:weak-axill} with the following inequalities, which are weaker than \eqref{eq:sw2aaa:var}:
\begin{align}\label{eq:sw2aaa:varinf}
		-\infty<\E^P\big[ \ilim_{(\nu,\zeta)\to (+\infty,\xi)} g^\nu(\zeta)\big]
		\le\ilim_{\nu \to +\infty}\E^{P^\nu}\big[  g^\nu(\xi)\big].
	\end{align}

If $\{P^\nu\}_{\nu\in\mathbb{N}}$ converges to $P$ setwise, then the following inequalities
\begin{align}\label{eq:sw2aaa:var1}
		-\infty<\E^P\big[ \slim_{\nu \to +\infty} g^\nu(\xi)]
		\le\ilim_{\nu \to +\infty}\E^{P^\nu}\big[  g^\nu(\xi)\big],
\end{align}
which are similar to \eqref{eq:sw2aaa:var}, imply the validity of Fatou's inequality \cite[Theorem 4.1]{FKZ2014}.

If $P^\nu=P,$ then $P^\nu\to P$ setwise trivially.  If   $g^\nu=g,$ then  \eqref{eq:sw2aaa:var1} becomes
$\E^P\big[ g(\xi)\big]>-\infty,$ which is the existence of an integrable minorant.
This is a stronger condition than the uniform integrability condition \eqref{eq:ui} even in the classic case, when $\X$ is a singleton. In this special, case when $P^\nu = P$ and $f^\nu = f$, \cite[Proposition 8.55]{primer} states that if $f$ is random lsc and locally inf-integrable, then $f$ is lsc. In terms of the present paper, this is the special case when $g=f.$ If $P^\nu\to P,$ in total variation, then additional results on the convergence of expectations can be obtained from the uniform Fatou's lemma~\cite[Theorem 2.1]{UFL}.

\begin{theorem}\label{th:epi}{\rm(epi-convergence under weakly convergent probabilities)} Let $\X$ and $\Xi$ be metric spaces.
For probability measures $\{P, P^\nu, \nu\in \N\}$ on $(\Xi, \B(\Xi))$ and functions $\{f,f^\nu: \Xi\times \X\to \RR\}$, suppose that $P^\nu$ converges weakly to $P$, assumptions (i)-(iii) of Theorem~\ref{th:weak-axill} hold, and for each $x\in \X$ either $E^P[f](x)=+\infty$ or there exists a sequence $\{x^{\nu}\}_{\nu\in\mathbb{N}}\subset\X$ converging to $x$ with the following properties:
\begin{itemize}
\item[{\rm(i)}] one has
\begin{equation}\label{eq:uiapper}
\slim_{K\to+\infty}\slim_{\nu \to +\infty} \E^{P^\nu}\big[  f^{\nu}(\xi,x^\nu)\h\big\{\xi\,:\, f^{\nu}(\xi,x^\nu)\ge K\big\}\big]=0;
\end{equation}

\item[{\rm(ii)}] for $P\mbox{-a.e. }\xi\in\Xi$, 
\begin{equation}\label{eq:conv1conv}
\slim_{(\nu,\zeta)\to(+\infty,\xi)} f^{\nu}(\zeta,x^{\nu}) \le f(\xi,x).
\end{equation}
\end{itemize}
Then
\begin{equation}\label{eq:th42:1}
\elim\limits_{\nu \to +\infty} E^{P^\nu}\big[{f^\nu}\big] = E^P\big[f\big].
\end{equation}
\end{theorem}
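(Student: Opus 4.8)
The plan is to verify directly, at an arbitrary fixed $x\in\X$, the two defining conditions of epi-convergence recalled in Section~\ref{sec:prelim}, applied to $h:=E^P[f]$ and $h^\nu:=E^{P^\nu}[f^\nu]$. The liminf condition is essentially the content of Theorem~\ref{th:weak-axill}: since assumptions (i)--(iii) of that theorem are among the present hypotheses, it yields
\[
\ilim_{(\nu,y)\to(+\infty,x)} E^{P^\nu}\big[f^\nu\big](y) \ge E^P\big[f\big](x).
\]
Because the joint lower limit is dominated by $\ilim_{\nu\to+\infty} E^{P^\nu}[f^\nu](x^\nu)$ for every sequence $x^\nu\to x$, it follows that $E^P[f](x)\le \ilim_{\nu\to+\infty} E^{P^\nu}[f^\nu](x^\nu)$ for all such sequences, which is exactly condition (i) of the definition.

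For the recovery (limit) condition I would split into two cases. If $E^P[f](x)=+\infty$, take $x^\nu=x$; the liminf bound just established forces $\ilim_{\nu\to+\infty} E^{P^\nu}[f^\nu](x^\nu)\ge+\infty$, so the limit exists and equals $+\infty=E^P[f](x)$. If instead $E^P[f](x)<+\infty$, I would use the sequence $x^\nu\to x$ furnished by the hypothesis, which satisfies \eqref{eq:uiapper} and \eqref{eq:conv1conv}. Setting $h^\nu(\,\cdot\,):=f^\nu(\,\cdot\,,x^\nu)$, condition \eqref{eq:uiapper} is precisely the hypothesis \eqref{eq:uiefc} of Corollary~\ref{th:LFC}, so that corollary gives
\[
\slim_{\nu\to+\infty} E^{P^\nu}\big[f^\nu\big](x^\nu) \le \E^P\big[\slim_{(\nu,\zeta)\to(+\infty,\xi)} f^\nu(\zeta,x^\nu)\big].
\]
By \eqref{eq:conv1conv} the integrand on the right is $\le f(\xi,x)$ for $P$-a.e.\ $\xi$, and monotonicity of the expectation under the arithmetic conventions fixed in the Introduction yields $\slim_{\nu\to+\infty} E^{P^\nu}[f^\nu](x^\nu)\le E^P[f](x)$.

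Combining the two halves along this recovery sequence produces the chain
\[
E^P\big[f\big](x) \le \ilim_{\nu\to+\infty} E^{P^\nu}\big[f^\nu\big](x^\nu) \le \slim_{\nu\to+\infty} E^{P^\nu}\big[f^\nu\big](x^\nu) \le E^P\big[f\big](x),
\]
so all the inequalities are equalities, the limit exists, and $\lim_{\nu\to+\infty} E^{P^\nu}[f^\nu](x^\nu)=E^P[f](x)$, establishing condition (ii). Since $x$ was arbitrary, \eqref{eq:th42:1} follows. I expect no conceptual obstacle here, as the heavy lifting is done by Theorem~\ref{th:weak-axill} and Corollary~\ref{th:LFC}; the only genuinely delicate points are bookkeeping ones. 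First, one must confirm that passing from the joint lower limit in Theorem~\ref{th:weak-axill} to the sequential lower limit is valid for every approximating sequence (it is, since the joint liminf is the pointwise-smallest of these sequential liminf values). Second, one must check that the monotonicity step respects the $\pm\infty$ conventions, in particular when $\slim_{(\nu,\zeta)\to(+\infty,\xi)} f^\nu(\zeta,x^\nu)$ equals $-\infty$ on a set of positive $P$-measure; this is routine once the hypotheses of Corollary~\ref{th:LFC} are matched to \eqref{eq:uiapper}--\eqref{eq:conv1conv}.
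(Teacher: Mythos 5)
Your proposal is correct and follows essentially the same route as the paper's own proof: the liminf half via Theorem~\ref{th:weak-axill}, and the recovery half via Corollary~\ref{th:LFC} with $h^\nu(\,\cdot\,):=f^\nu(\,\cdot\,,x^\nu)$, matching \eqref{eq:uiapper} to \eqref{eq:uiefc} and using \eqref{eq:conv1conv} to bound the resulting integrand. Your bookkeeping remarks (joint liminf minorizing every sequential liminf, and monotonicity of the expectation under the stated $\pm\infty$ conventions) are exactly the ``minor technical'' points the paper leaves implicit, and they check out.
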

\begin{proof} Choose an arbitrary $x\in\X.$ If $E^P\big[f\big](x)=+\infty,$  then Theorem~\ref{th:weak-axill} implies $\elim\limits_{\nu \to +\infty} E^{P^\nu}\big[{f^\nu}\big](x) = E^P\big[f\big](x).$

Let $E^P\big[f\big](x)<+\infty.$
Fix  a sequence $\{x^{\nu}\}_{\nu\in\mathbb{N}}\subset\X$ converging to $x$ and satisfying \eqref{eq:uiapper} and \eqref{eq:conv1conv}. Inequality \eqref{eq:conv2} follows from Theorem~\ref{th:weak-axill}. Therefore, to prove epi-convergence it is sufficient to establish the inequality
\begin{equation}\label{eq:newnew}
\slim\limits_{\nu \to +\infty} E^{P^\nu}\big[f^\nu\big](x^\nu)\le E^P\big[f\big](x),
\end{equation}
which follows from Corollary~\ref{th:LFC} with $h^\nu(\,\cdot\,):=f^\nu(\,\cdot\,,x^\nu),$ $\nu\in\mathbb{N}.$ We observe that \eqref{eq:uiapper} implies \eqref{eq:uiefc}, and \eqref{eq:new1c} and \eqref{eq:conv1conv} imply \eqref{eq:newnew}.
\end{proof}

Theorem~\ref{th:epi} strengthens the results of  \cite{DupacovaWets,Diem} by considering approximating functions $f^\nu$ rather than a fixed integrand $f$. Specifically, \cite[Theorem~1]{Diem} allows $\X$ to be a Hausdorff topological space and confirms epi-convergence of $E^{P^\nu}\big[f\big]$ to $E^{P}\big[f\big]$ under the assumptions that $f$ is lsc and $f(\cdot,x)$ is usc for each $x$. Moreover, $P^\nu$ converges weakly to $P$ and one has the following tightness condition:  for each $x\in \X$, for each neighborhood $V$ of $x,$ and for each $\epsilon>0,$ there exists a compact $K_\epsilon\subset \X$  such that
\begin{equation}\label{eq:tight}
 \int_{\Xi\setminus K_\epsilon} \big|f(\xi,x)\big| P^\nu (d\xi) + \int_{\Xi\setminus K_\epsilon} \big|f_V(\xi)\big| P^\nu (d\xi) < \epsilon  ~~\forall \nu,
\end{equation}
where
\[
f_V(\xi) = \ilim\limits_{\zeta\to\xi}[\inf\limits_{y\in V}f(\zeta,y)],~~~\xi\in\Xi.
\]

These assumptions of \cite[Theorem~1]{Diem}, when $\X$ is a metrizable space, imply the conditions of Theorem~\ref{th:epi}. Indeed, suppose that $f$ is lsc and $f(\cdot,x)$ is usc for each $x,$ and suppose that for a fixed $x\in \X,$ for each neighborhood $V$ of $x,$ and for each $\epsilon>0,$ there exists compact $K_\epsilon$  such that \eqref{eq:tight} hold. Then, we can verify \eqref{eq:ui} as follows. According to the proof of Theorem~1 from \cite{Diem}, the function $f_V$ is continuous. Its absolute value is bounded by some $\gamma>0$ on the compact set $K_\epsilon.$ Therefore,
\begin{equation}\label{eq:Diem1}
\begin{aligned}
\ilim_{y\to x} &\E^{P^\nu}\big[ f(\xi,y)\h\big\{\xi\,:\, f(\xi,y)\le-\gamma\big\}\big]\\
&\ge \inf_{y\in V}\E^{P^\nu}\big[ f(\xi,y)\h\big\{\xi\,:\, f(\xi,y)\le-\gamma\big\}\big]\\
&\ge \E^{P^\nu}\big[ f_V(\xi)\h\big\{\xi\,:\, f_V(\xi)\le-\gamma\big\}\big])\ge-\varepsilon,
\end{aligned}
\end{equation}
where the first inequality follows from the definition of lower limits, the second inequality holds because $f_V(\xi)\le f(\xi,y)<0$ if $f(\xi,y)\le-\gamma,$ and the last inequality follows from \eqref{eq:tight} because if $f_V(\xi)<-\gamma,$ then $\xi \in \Xi\setminus K_\epsilon.$ Thus, \eqref{eq:ui} holds. Similarly to these arguments, we can verify \eqref{eq:uiapper} with $x^\nu:=x,$ $\nu\in\mathbb{N}.$ Inequality \eqref{eq:conv1} for $f^\nu = f,$ $\nu\in\mathbb{N},$ follows from lsc of $f.$ Inequality \eqref{eq:conv1conv} for $f^\nu = f,$ $\nu\in\mathbb{N},$ and $x^\nu:=x,$ $\nu\in\mathbb{N},$ follows from usc of $f(\,\cdot\,,x).$

If the family of functions $\{f^{\nu}(\,\cdot\,,x^{\nu}), \nu\in\mathbb{N}\}$ is equi-usc, that is,  if the family of functions $\{-f^{\nu}(\,\cdot\,,x^{\nu})$, $\nu\in\mathbb{N}\}$ is equi-lsc, then \eqref{eq:conv1conv} can be replaced with {\em upper semiconvergence} of $\{f^{\nu}(\,\cdot\,,x^{\nu}), \nu\in\mathbb{N}\}$ to $f(\,\cdot\,,x)$ in measure $P,$ that is, for each  $\varepsilon > 0$
	\begin{align}\label{eq:measure_upper}
		P \big(\{ \xi \in\Xi : f^{\nu}(\xi,x^{\nu}) \geq f(\xi,x) + \varepsilon \}\big) \to 0 \text{ as } \nu\to +\infty.
		\end{align}
A sufficient condition for \eqref{eq:measure_upper} is
\begin{equation}\label{eq:poinwise_upper}
\slim_{\nu \to +\infty} f^{\nu}(\xi,x^{\nu}) \le f(\xi,x) \quad \mbox{for }P\mbox{-a.e. }\xi\in\Xi.
\end{equation}

\section{Applications}

As illustrations of the possibilities emerging from the above results, we present two applications of Theorem \ref{th:weak-axill} (parametric Fatou's lemma) and two examples of Theorem \ref{th:epi} (epi-convergence). In the following, for any set $C$, we let $\iota_C(x) = 0$ if $x\in C$ and $\iota_C(x) = +\infty$ otherwise.

\subsection{Sieves in Nonparametric Statistics}

Consistency analysis of parametric and nonparametric estimation problems in statistics is supported by the prior results. Here, we consider $M$-estimators with sieves \cite{GemanHwang.82, RoysetWets.20} that are distinguished by being selected from a collection of sets indexed by sample size and other varying quantities. Specifically, let $(\X,d_\X)$ be a metric space of extended real-valued functions defined on a closed subset $S$ of $\R^d$, let $(\Xi,\B(\Xi))$ be a measurable space, and let $(\PP,d_\PP)$ be a metric space of probability measures defined on $(\Xi, \B(\Xi))$. For given collections $\{f, f^\nu:\Xi\times \X \to \RR, \nu\in \N\}$, $\{X, X^\nu \subset \X, \nu\in\N\}$, and $\{P, P^\nu\in \PP, \nu\in \N\}$, a minimizer of the optimization problem
\begin{equation}\label{eqn:estimatorproblem}
\nnmin_{x\in X^\nu} \,\E^{P^\nu} \big[f^\nu(\xi,x)\big]
\end{equation}
is an estimator of ``true'' functions in $\X$ that, in turn, solve
\begin{equation}\label{eqn:estimatorproblemActual}
\nnmin_{x\in X} \,\E^P\big[f(\xi,x)\big].
\end{equation}
Often, $P^\nu$ is an empirical distribution produced by $\nu$ samples as an approximation of the actual distribution $P$, but many other possibilities exist; see for example \cite{Hess,Seri3}. The discrepancy between $X^\nu$ and $X$ stems from the need to consider restrictions and approximations of the function class $\X$, especially when $\nu$ is small and/or $d$ is large. The ``loss function'' $f$ is approximated by $f^\nu$ to accommodate computations. For example, a nonsmooth hinge-loss might be replaced by an approximating soft-max loss. The consistency of estimators obtained from \eqref{eqn:estimatorproblem} can be established through epi-convergence; see \cite{DupacovaWets,Hess,Seri3}. However, the focus has been on the special case $f^\nu = f$ and $X^\nu = X$ for all $\nu$; an exception is \cite{RoysetWets.20} where  an adjustment of \eqref{eqn:estimatorproblem} is proposed for $X^\nu \neq X$ under the assumption that $X = \X$ and $f^\nu = f$. The source of the difficulty caused by $X^\nu \neq X$ is gleaned from the assumptions in Theorem \ref{th:epi}, which essentially insist on $f^\nu(\xi^\nu,x^\nu) \to f(\xi,x)$ when $\xi^\nu\to \xi$ and $x^\nu\to x$. We discuss a different approach leveraging Theorem \ref{th:weak-axill}.

We equip $\X \times \PP$ with the product topology. The actual problem \eqref{eqn:estimatorproblemActual} is equivalently expressed as
\begin{equation}\label{eqn:estimatorproblemActual2}
\nnmin_{(x,Q)\in \X \times \PP} \,\phi(x,Q) := \E^Q\big[f(\xi,x) + \iota_X(x)\big] + \iota_{\{P\}}(Q).
\end{equation}
This reformulation motivates an alternative to \eqref{eqn:estimatorproblem} involving a nonnegative parameter $\theta^\nu\in [0, +\infty)$:
\begin{equation}\label{eqn:estimatorproblem2}
\nnmin_{(x,Q)\in \X \times \PP} \,\phi^\nu(x,Q) := \E^Q\big[f^\nu(\xi,x) + \iota_{X^\nu}(x)\big] + \theta^\nu d_\PP(Q,P^\nu);
\end{equation}
see \cite{ChenRoyset.22} for related reformulations in finite dimensions.
We show via  Theorem \ref{th:weak-axill} that estimators obtained as the $x$-component of minimizers of \eqref{eqn:estimatorproblem2} are consistent, in the sense that $\phi^\nu$ epi-converges to $\phi$ (as functions on $\X\times \PP$), under broad conditions. In particular, the analysis is unencumbered by $X^\nu \neq X$. These sets may even depend on $\xi$, but the details are omitted below. Regardless, we observe that it is advantageous to keep $\iota_{X^\nu}(x)$ inside the expectation defining $\phi^\nu$ because it helps with assumption (ii) in Proposition \ref{prop:consistency}.

We recall that the {\em outer limit} of a sequence of sets $\{A^\nu, \nu\in \N\}$ in a topological space, denoted by $\outlim A^\nu$, is the collection of points to which a subsequence of $\{a^\nu \in A^\nu, \nu\in \N\}$ converges. The {\em inner limit}, denoted by $\innlim A^\nu$, is
the collection of points to which a sequence $\{a^\nu \in A^\nu, \nu \in \N\}$ converges. If both the inner limit and the outer limit are equal to $A$, we say that $\{A^\nu, \nu \in \N\}$ {\em  set-converges} to $A$.

\begin{proposition}\label{prop:consistency}{\rm(consistency of sieved $M$-estimator)}
In the notation of this subsection, suppose that $\{X^\nu, \nu\in \N\}$ are closed and set-converge to $X$, $d_\PP$ metrizes weak convergence, $\theta^\nu \to +\infty$, $\theta^\nu d_\PP(P^\nu,P)\to 0$,  and the following hold for each $x\in \X$:
\begin{itemize}
\item[{\rm(i)}] $f(\,\cdot\,,x)$ and $f^\nu(\,\cdot\,,x)$ are measurable  for all $\nu\in \N$ and $f(\xi,x) > - \infty$ for all $\xi\in \Xi$;

\item[{\rm(ii)}] for $Q^\nu \to P$
\begin{equation}
\ilim_{K\to+\infty}\ilim_{(\nu,y)\to(+\infty,x)} \E^{Q^\nu}\big[ \big(f^\nu(\xi,y)+\iota_{X^\nu}(y)\big)\h\big\{\xi\,:\, f^\nu(\xi,y) + \iota_{X^\nu}(y)\le-K\big\}\big]=0;
\end{equation}

\item[{\rm(iii)}] for $P\mbox{-a.e. }\xi\in\Xi$
\begin{equation}
\ilim_{(\nu,y,\zeta)\to(+\infty,x,\xi)} f^\nu(\zeta,y) \ge f(\xi,x);
\end{equation}

\item[{\rm(iv)}] for $\{x^\nu\in X^\nu, \nu\in \N\}$ converging to $x$ and  $\{Q^\nu\in \PP, \nu\in \N\}$ converging to $Q \in \PP$,
\[
\ilim_{\nu \to +\infty} \E^{Q^\nu}\big[f^\nu(\xi,x^\nu)\big] > - \infty;
\]

\item[{\rm(v)}] for $\{x^\nu\in X^\nu, \nu\in \N\}$ converging to $x$, there exists a $P$-integrable function $g:\Xi\to [0,+\infty)$ such that, for $P\mbox{-a.e. }\xi\in\Xi$, one has $f^\nu(\xi,x^\nu) \leq g(\xi)$ for all $\nu\in \N$ and $f^\nu(\xi,x^\nu) \to f(\xi,x)$ as $\nu\to +\infty$.

\end{itemize}
Then, $\phi^\nu$ epi-converges to $\phi$.
\end{proposition}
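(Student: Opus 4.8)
The plan is to verify the two defining conditions of epi-convergence separately at an arbitrary point $(x,Q)\in\X\times\PP$. Throughout I abbreviate $\tilde f(\xi,x):=f(\xi,x)+\iota_X(x)$ and $\tilde f^\nu(\xi,x):=f^\nu(\xi,x)+\iota_{X^\nu}(x)$, so that $\phi(x,Q)=\E^Q[\tilde f(\xi,x)]+\iota_{\{P\}}(Q)$ and $\phi^\nu(x,Q)=\E^Q[\tilde f^\nu(\xi,x)]+\theta^\nu d_\PP(Q,P^\nu)$. I would first record that $\theta^\nu\to+\infty$ together with $\theta^\nu d_\PP(P^\nu,P)\to0$ forces $d_\PP(P^\nu,P)\to0$, so $P^\nu$ converges weakly to $P$; this is used repeatedly. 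The indicator terms are absorbed through the paper's arithmetic conventions, under which $a+\iota_{X^\nu}(x)=+\infty$ for every $a\in\RR$ whenever $x\notin X^\nu$, so that $\tilde f^\nu(\xi,x)$ and $\tilde f(\xi,x)$ are measurable in $\xi$ by assumption (i).

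For the liminf condition (i) of epi-convergence, I would fix $(x^\nu,Q^\nu)\to(x,Q)$ and split into two cases. If $Q\neq P$, then $\phi(x,Q)=+\infty$, and I would show $\phi^\nu(x^\nu,Q^\nu)\to+\infty$: on indices with $x^\nu\notin X^\nu$ the value is already $+\infty$, while on indices with $x^\nu\in X^\nu$ assumption (iv) bounds $\E^{Q^\nu}[f^\nu(\xi,x^\nu)]$ from below and the penalty $\theta^\nu d_\PP(Q^\nu,P^\nu)\to+\infty$ since $d_\PP(Q^\nu,P^\nu)\to d_\PP(Q,P)>0$. If $Q=P$, I would invoke Theorem~\ref{th:weak-axill} along the fixed sequence $Q^\nu\to P$ with integrands $\tilde f^\nu$; its hypotheses (i)--(iii) follow from assumptions (i)--(iii) here, using that $\iota_{X^\nu}(y)\ge0$ preserves the lower-limit inequality when $x\in X$, and that $x\notin X=\outlim X^\nu$ forces $y\notin X^\nu$ (hence $\tilde f^\nu(\zeta,y)=+\infty$) for $(\nu,y)$ near $(+\infty,x)$ when $x\notin X$. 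This yields $\ilim_\nu\E^{Q^\nu}[\tilde f^\nu(\xi,x^\nu)]\ge\E^P[\tilde f(\xi,x)]=\phi(x,P)$, and since the penalty is nonnegative, $\ilim_\nu\phi^\nu(x^\nu,Q^\nu)\ge\phi(x,P)$.

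For the limit condition (ii), at points with $\phi(x,Q)=+\infty$ any approaching sequence suffices, because the liminf condition already forces the limit to be $+\infty$; so only the case $Q=P$, $x\in X$, $\E^P[f(\xi,x)]<+\infty$ requires a recovery sequence. The clean choice here is the \emph{constant} measure $Q^\nu:=P$ together with $x^\nu\in X^\nu$, $x^\nu\to x$, which exists since $x\in X=\innlim X^\nu$. Then $\iota_{X^\nu}(x^\nu)=0$, the penalty equals $\theta^\nu d_\PP(P,P^\nu)\to0$, and $\phi^\nu(x^\nu,P)=\E^P[f^\nu(\xi,x^\nu)]+\theta^\nu d_\PP(P,P^\nu)$. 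Because the expectation is now taken against the single fixed measure $P$, assumption (v) puts me squarely in the setting of the classical reverse Fatou lemma: the bound $f^\nu(\xi,x^\nu)\le g(\xi)$ with $g$ $P$-integrable and the pointwise convergence $f^\nu(\xi,x^\nu)\to f(\xi,x)$ $P$-a.e. give $\slim_\nu\E^P[f^\nu(\xi,x^\nu)]\le\E^P[f(\xi,x)]$. Hence $\slim_\nu\phi^\nu(x^\nu,P)\le\phi(x,P)$, which with the liminf condition delivers the limit.

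The hard part will be the liminf condition in the case $Q=P$: since $\phi^\nu$ carries its integrating measure $Q$ as a decision variable, one cannot merely quote an epi-convergence theorem with a fixed measure sequence, and must instead recast the bound as an instance of Theorem~\ref{th:weak-axill} along the specific sequence $Q^\nu\to P$, while folding the constraint indicators $\iota_{X^\nu}$ into the integrand (which is where set-convergence of $\{X^\nu\}$ and the $\pm\infty$ conventions enter). By contrast, I would deliberately take $Q^\nu=P$ in the recovery sequence precisely so that the measure component becomes trivial and the upper bound collapses to classical dominated convergence, thereby avoiding any weak-convergence reverse Fatou and the joint-limit hypothesis such a result would otherwise demand — this is exactly why assumption (v) can be phrased through ordinary pointwise domination and convergence.
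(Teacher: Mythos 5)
Your proposal is correct and takes essentially the same route as the paper's proof: you fold the constraint indicators into the integrands, invoke Theorem~\ref{th:weak-axill} along $Q^\nu\to P$ for the liminf half (with assumption (iv) plus the diverging penalty handling $Q\neq P$), and build the recovery sequence with the constant choice $Q^\nu=P$ and $x^\nu\in X^\nu\to\bar x$ from the inner limit. The only cosmetic deviations are your two-case split for the liminf condition (absorbing $\bar x\notin X$ into the integrand inequality via the outer limit, where the paper treats it as a separate case in which $\phi^\nu(x^\nu,Q^\nu)=+\infty$ eventually) and your use of the reverse Fatou lemma with only the upper dominating function from (v), where the paper invokes dominated convergence in conjunction with (ii); both variants are valid.
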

\begin{proof}
First, we consider the liminf-condition required for epi-convergence. Let $\{(x^\nu,Q^\nu) \in \X \times \PP, \nu\in \N\}$ converge to $(\bar x,Q) \in \X \times \PP$. We consider three cases.

(a) Suppose that $\bar x\in X$ and $Q = P$. Without loss of generality, we assume that $x^\nu \in X^\nu$ because otherwise $\phi^\nu(x^\nu,Q^\nu) = +\infty$. Let $\hat f, \hat f^\nu:\Xi\times \X\to \RR$ be defined by $\hat f(\xi,x) := f(\xi,x) + \iota_X(x)$ and $\hat f^\nu(\xi,x) := f^\nu(\xi,x) + \iota_{X^\nu}(x)$. Then, for each $x\in \X$, $\hat f(\cdot, x)$ and $\hat f^\nu(\cdot, x)$ are measurable; recall that $X^\nu$ is closed and $X$ as well by virtue of being a set-limit. Let $x\in \X$. For $P$-a.e. $\xi\in \Xi$, one has
\[
\ilim_{(\nu,y,\zeta) \to (+\infty,x,\xi)} \hat f^\nu(\zeta,y) \geq  \ilim_{(\nu,y,\zeta) \to (+\infty,x,\xi)} f^\nu(\zeta,y) + \ilim_{(\nu,y) \to (+\infty,x)} \iota_{X^\nu}(y) \geq f(\xi,x) + \iota_X(x) = \hat f(\xi,x).
\]
The last inequality follows by (iii) and the fact that $X^\nu$ set-converges to $X$. Thus, we can invoke Theorem \ref{th:weak-axill} for the integrands $\{\hat f,\hat f^\nu, \nu\in \N\}$ and probabilities $\{P,Q^\nu, \nu\in\N\}$ and conclude that
\[
\ilim_{\nu\to +\infty} \phi^\nu(x^\nu,Q^\nu) \geq \ilim_{\nu\to +\infty} \E^{Q^\nu}\big[f^\nu(\xi,x^\nu) + \iota_{X^\nu}(x^\nu)\big] = \ilim_{\nu\to +\infty} E^{Q^\nu}[\hat f^\nu](x^\nu) \geq E^P[\hat f](\bar x) = \phi(\bar x,P).
\]

(b) If $\bar x\not\in X$, then $\phi(\bar x,Q) = +\infty$ and we seek to establish that $\phi^\nu(x^\nu,Q^\nu) \to +\infty$. Since $X^\nu$ set-converges to $X$, $x^\nu \not\in X^\nu$ for sufficiently large $\nu$. This implies that $\phi^\nu(x^\nu,Q^\nu) = +\infty$ for sufficiently large $\nu$ because $\iota_{X^\nu}(x^\nu) = +\infty$.

(c) If $\bar x\in X$ and $Q\neq P$, then again $\phi(\bar x,Q) = +\infty$. Since $\{d_\PP(Q^\nu,P^\nu), \nu\in \N\}$ is bounded away from zero as $\nu\to +\infty$ because $d_\PP(Q^\nu,P^\nu) \geq d_\PP(Q,P) - d_\PP(Q,Q^\nu) -  d_\PP(P,P^\nu)$, one has
\[
\ilim_{\nu\to +\infty} \phi^\nu(x^\nu,Q^\nu) \geq \ilim_{\nu\to +\infty} \E^{Q^\nu}\big[f^\nu(\xi,x^\nu) + \iota_{X^\nu}(x^\nu)\big] + \ilim_{\nu\to +\infty} \theta^\nu d_\PP(Q^\nu,P^\nu) = +\infty.
\]
Here, the inequality follows from (iv).

Second, we consider the limit-condition in the definition of epi-convergence. Let $(\bar x,Q) \in \X \times \PP$. In light of the above liminf-condition, it suffices to show that $\slim_{\nu\to +\infty} \phi^\nu(x^\nu,Q^\nu) \leq \phi(x,Q)$ for some $x^\nu\to x$ and $Q^\nu\to Q$. Without loss of generality, we assume that $\bar x\in X$ and $Q = P$ because otherwise the limsup-condition holds trivially. Since $X^\nu$ set-converges to $X$, there exists $\{x^\nu \in X^\nu, \nu\in \N\}$ converging to $\bar x$. We construct $\{Q^\nu = P, \nu \in \N\}$. Then,
\[
\slim_{\nu\to +\infty} \phi^\nu(x^\nu,Q^\nu) \leq \ilim_{\nu\to +\infty} \E^{P}\big[f^\nu(\xi,x^\nu)\big] + \slim_{\nu\to +\infty} \theta^\nu d_\PP(P,P^\nu) \leq \E^P\big[f(\xi,\bar x)\big] = \phi(\bar x,P)
\]
because $\theta^\nu d_\PP(P,P^\nu) \to 0$. The last inequality follows from (v), which in conjunction with (ii), permits us to invoke the dominated convergence theorem.
\end{proof}

As a concrete example, we let $\X$ be the space of extended real-valued functions on $S\subset\R^d$, excluding the function identical to $-\infty$. This is a Polish space under the hypo-distance; see \cite{RoysetWets.20}. Let $\Xi = S$ and $f^\nu(\xi,x) = f(\xi,x) = -\log x(\xi)$, which corresponds to a maximum likelihood estimator of a probability density on $S$. In this case, the requirement in (iii) holds because $x^\nu$ converging to $x$ in the hypo-distance implies that $\slim_{\nu\to +\infty} x^\nu(\xi^\nu) \leq x(\xi)$ for any $\xi^\nu \in S\to \xi$. The requirement in (v) about $f(\xi,x^\nu) \to f(\xi,x)$ when $x^\nu$ converges to $x$ in the hypo-distance now translates into having $x^\nu(\xi) \to x(\xi)$, which holds if $\X$ is equi-usc; see \cite{RoysetWets.20} for details and examples.

\subsection{Mollifiers for Discontinuous Functions}

For a normed linear space $\X$, it can be challenging to minimize a function $g:\X\to \RR$ if it is not continuous. A standard approach is to consider mollifiers to ``smooth'' the function sufficiently, i.e., to replace $g$ by the approximation $x\mapsto \E^{P^\nu}[g(x+\xi)]$, where the probabilities $P^\nu$ converge weakly to $P$, with $P(\{0\}) = 1$. Typically, these probabilities are assumed to be absolutely continuous relative to a Lebesgue measure; see \cite{ENW}. One would establish epi-convergence of the approximations to $g$ as $\nu\to +\infty$ to justify the approach. We show that Theorem \ref{th:weak-axill} can be brought in to justify approaches of this kind under general conditions including approximating $g^\nu:\X\to \RR$ and discontinuous $g$. In contrast, \cite{ENW} assumes that for each $x\in \X$ there exist points $x^\nu\to x$ such that $g(x^\nu) \to g(x)$ and then also in the special case of $\X = \R^n$.

Suppose that $\{g^\nu:\X\to \RR, \nu \in \N\}$ and $(\PP,d_\PP)$ is a metric space of probabilities defined on $(\X, \B_\X)$, where $\B_\X$ is the Borel sigma-algebra on $\X$. For a collection  $\{P^\nu\in \PP, \nu\in \N\}$ and $\theta^\nu\in [0, +\infty)$, we consider the problem
\begin{equation}\label{eqn:moll}
\nnmin_{(x,Q)\in \X \times \PP} \,\phi^\nu(x,Q) := \E^Q\big[g^\nu(x+\xi)\big] + \theta^\nu d_\PP(Q,P^\nu)
\end{equation}
as a substitute for the actual problem of minimizing $g$ over $\X$ or, equivalently, solving
\begin{equation}\label{eqn:mollactual}
\nnmin_{(x,Q)\in \X \times \PP} \,\phi(x,Q) := \E^Q\big[g(x+\xi)\big] + \iota_{\{P\}}(Q).
\end{equation}

We show via  Theorem \ref{th:weak-axill} that $\phi^\nu$ epi-converges to $\phi$ (as functions on $\X\times \PP$), which justifies the solution of \eqref{eqn:moll} in lieu of \eqref{eqn:mollactual}.

\begin{proposition}\label{prop:mollifers}{\rm(mollifiers)}
In the notation of this subsection, suppose that $d_\PP$ metrizes weak convergence, $\theta^\nu \to +\infty$, $\theta^\nu d_\PP(P^\nu,P)\to 0$, and the following hold for each $x\in \X$:
\begin{itemize}
\item[{\rm(i)}] $\{g, g^\nu, \nu\in \N\}$ are measurable;

\item[{\rm(ii)}] for $Q^\nu \to Q \in \PP$
\begin{equation}
\ilim_{K\to+\infty}\ilim_{(\nu,y)\to(+\infty,x)} \E^{Q^\nu}\big[ \big(g^\nu(y+\xi)\big)\h\big\{\xi\,:\, g^\nu(y+\xi)\le-K\big\}\big]=0;
\end{equation}

\item[{\rm(iii)}] one has
\begin{equation}
\ilim_{(\nu,y)\to(+\infty,x)} g^\nu(y) \ge g(x);
\end{equation}

\item[{\rm(iv)}] for $Q\in \PP$, $\E^Q[g(x + \xi)]>-\infty$;

\item[{\rm(v)}] there exists a $P$-integrable function $h:\Xi\to [0,+\infty)$ such that, for $P\mbox{-a.e. }\xi\in\Xi$, one has $g^\nu(x+\xi) \leq h(\xi)$ for all $\nu\in \N$ and $g^\nu(x) \to g(x)$ as $\nu\to +\infty$.

\end{itemize}
Then, $\phi^\nu$ epi-converges to $\phi$.
\end{proposition}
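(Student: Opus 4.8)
The plan is to follow the architecture of the proof of Proposition~\ref{prop:consistency}, verifying the two halves of epi-convergence separately, but with $\Xi=\X$ in the role of the sample space and $x\in\X$ the decision variable. Throughout I would work with the integrands $\hat f^\nu(\xi,x):=g^\nu(x+\xi)$ and $\hat f(\xi,x):=g(x+\xi)$, which are measurable in $\xi$ for each fixed $x$ by (i), since the shift $\xi\mapsto x+\xi$ is continuous. I first note that $\theta^\nu\to+\infty$ together with $\theta^\nu d_\PP(P^\nu,P)\to 0$ forces $d_\PP(P^\nu,P)\to 0$, so $P^\nu\to P$ weakly, which I will use implicitly below.

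For the liminf-condition, I would take an arbitrary sequence $(x^\nu,Q^\nu)\to(\bar x,Q)$ and split on whether $Q=P$. When $Q=P$ the penalty term is nonnegative and may be discarded, so it suffices to bound $\ilim_{\nu} \E^{Q^\nu}[g^\nu(x^\nu+\xi)]$ below by $\E^P[g(\bar x+\xi)]$; this is exactly Theorem~\ref{th:weak-axill} applied to $\{\hat f,\hat f^\nu\}$ and $\{P,Q^\nu\}$. Its hypotheses (i) and (ii) transcribe directly from (i) and (ii) here, while hypothesis (iii) follows from (iii) by a change of variables: writing $w=y+\zeta$, one has $w\to \bar x+\xi$ whenever $y\to\bar x$ and $\zeta\to\xi$, so $\ilim_{(\nu,y,\zeta)\to(+\infty,\bar x,\xi)} g^\nu(y+\zeta)\ge \ilim_{(\nu,w)\to(+\infty,\bar x+\xi)} g^\nu(w)\ge g(\bar x+\xi)$, the last inequality being (iii) evaluated at the point $\bar x+\xi$ (the first inequality holds because the admissible $w$-sequences of the form $y+\zeta$ are a subset of all sequences converging to $\bar x+\xi$). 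Since the double lower limit in the conclusion of Theorem~\ref{th:weak-axill} is dominated by the lower limit along the fixed sequence $x^\nu\to\bar x$, this yields $\ilim_{\nu}\phi^\nu(x^\nu,Q^\nu)\ge \E^P[g(\bar x+\xi)]=\phi(\bar x,P)$.

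When $Q\neq P$, condition (iv) makes $\phi(\bar x,Q)=\E^Q[g(\bar x+\xi)]+\iota_{\{P\}}(Q)=+\infty$ unambiguous (it rules out the forbidden $-\infty+\infty$), and I would show $\phi^\nu(x^\nu,Q^\nu)\to+\infty$. The triangle inequality gives $d_\PP(Q^\nu,P^\nu)\ge d_\PP(Q,P)-d_\PP(Q,Q^\nu)-d_\PP(P,P^\nu)\to d_\PP(Q,P)>0$, so $\theta^\nu d_\PP(Q^\nu,P^\nu)\to+\infty$, while (ii), now used with $Q^\nu\to Q$, bounds $\ilim_{\nu}\E^{Q^\nu}[g^\nu(x^\nu+\xi)]$ below by a finite constant; adding a quantity diverging to $+\infty$ to one whose lower limit exceeds $-\infty$ produces a lower limit of $+\infty$. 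For the limit-condition it suffices to exhibit, for each $(\bar x,Q)$, a recovery sequence with $\slim_\nu\phi^\nu(x^\nu,Q^\nu)\le\phi(\bar x,Q)$; if $Q\neq P$ the right side is $+\infty$ and nothing is needed, so I would assume $Q=P$ and take the constant sequence $x^\nu=\bar x$, $Q^\nu=P$. Then $\phi^\nu(\bar x,P)=\E^P[g^\nu(\bar x+\xi)]+\theta^\nu d_\PP(P,P^\nu)$, the penalty vanishing in the limit by hypothesis, and the remaining bound $\slim_\nu\E^P[g^\nu(\bar x+\xi)]\le \E^P[g(\bar x+\xi)]$ follows from the reverse Fatou lemma: (v) supplies a $P$-integrable envelope $h$ with $g^\nu(\bar x+\xi)\le h(\xi)$ and the pointwise convergence $g^\nu(\bar x+\xi)\to g(\bar x+\xi)$ for $P$-a.e.\ $\xi$, while (iv) with $Q=P$ keeps $\E^P[g(\bar x+\xi)]>-\infty$ finite (equivalently, one may invoke dominated convergence, using (ii) for the complementary control from below).

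The step I expect to require the most care is the lower bound on $\ilim_{\nu}\E^{Q^\nu}[g^\nu(x^\nu+\xi)]$ in the case $Q\neq P$: condition (ii) is a double lower limit in $(\nu,y)$, so one must first pass to the fixed sequence $x^\nu\to\bar x$ (for which the double limit is a valid lower bound at each truncation level $K$), then reproduce the splitting of the negative part from the proof of Lemma~\ref{lm:new} to obtain $\slim_\nu\E^{Q^\nu}[(g^\nu(x^\nu+\xi))_-]<+\infty$, whence $\ilim_\nu\E^{Q^\nu}[g^\nu(x^\nu+\xi)]>-\infty$. Everything else is a direct transcription of the corresponding steps for Proposition~\ref{prop:consistency}, with the constraint indicator $\iota_{X^\nu}$ absent and the shift $x+\xi$ inserted.
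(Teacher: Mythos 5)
Your proposal is correct and follows essentially the same route as the paper's proof: define the shifted integrands $f^\nu(\xi,x)=g^\nu(x+\xi)$, $f(\xi,x)=g(x+\xi)$, invoke Theorem~\ref{th:weak-axill} for the liminf-condition with the case split $Q=P$ versus $Q\neq P$, and use the constant recovery sequence $x^\nu=\bar x$, $Q^\nu=P$ with dominated convergence via (v) for the limit-condition. The only (valid) cosmetic deviation is in the case $Q\neq P$, where the paper obtains $\ilim_{\nu\to+\infty}\E^{Q^\nu}[g^\nu(x^\nu+\xi)]>-\infty$ from the conclusion of Theorem~\ref{th:weak-axill} together with (iv), whereas you rederive it directly from (ii) by the negative-part splitting argument of Lemma~\ref{lm:new}.
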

\begin{proof}
First, we consider the liminf-condition required for epi-convergence. Let $\{(x^\nu,Q^\nu) \in \X \times \PP, \nu\in \N\}$ converge to $(\bar x,Q) \in \X \times \PP$. Let $f, f^\nu:\Xi\times \X\to \RR$ be defined by $f(\xi,x) := g(x+\xi)$ and $f^\nu(\xi,x) := g^\nu(x+\xi)$. Then, for each $x\in \X$, $f(\cdot, x)$ and $f^\nu(\cdot, x)$ are measurable. Let $x\in \X$. For all $\xi\in \Xi$, one has
\[
\ilim_{(\nu,y,\zeta) \to (+\infty,x,\xi)} f^\nu(\zeta,y) =  \ilim_{(\nu,y,\zeta) \to (+\infty,x,\xi)} g^\nu(y+\zeta) \geq g(x+\xi) = f(\xi,x)
\]
by assumption (iii). Thus, we can invoke Theorem \ref{th:weak-axill} for the integrands $\{f,f^\nu, \nu\in \N\}$ and probabilities $\{Q,Q^\nu, \nu\in\N\}$ and conclude that
\[
\ilim_{\nu\to +\infty} E^{Q^\nu}[f^\nu](x^\nu) \geq E^Q[f](\bar x).
\]
The right-hand side is not $-\infty$ by assumption (iv). We consider two cases. (a) Suppose that $Q = P$. Then,
\[
\ilim_{\nu\to +\infty} \phi^\nu(x^\nu,Q^\nu) \geq \ilim_{\nu\to +\infty} \E^{Q^\nu}\big[g^\nu(x^\nu+\xi)\big] = \ilim_{\nu\to +\infty} E^{Q^\nu}[f^\nu](x^\nu) \geq E^P[f](\bar x) = \phi(\bar x,P).
\]
(b) If $Q\neq P$, then $\phi(\bar x,Q) = +\infty$. Since $\{d_\PP(Q^\nu,P^\nu), \nu\in \N\}$ is bounded away from zero as $\nu\to +\infty$, one has
\[
\ilim_{\nu\to +\infty} \phi^\nu(x^\nu,Q^\nu) \geq \ilim_{\nu\to +\infty} \E^{Q^\nu}\big[g^\nu(x^\nu+\xi)\big] + \ilim_{\nu\to +\infty} \theta^\nu d_\PP(Q^\nu,P^\nu) = +\infty.
\]
Here, the first inequality follows from (iv).

Second, we consider the limit-condition in the definition of epi-convergence. Let $(\bar x,Q) \in \X \times \PP$. It suffices to show that $\slim_{\nu\to +\infty} \phi^\nu(x^\nu,Q^\nu) \leq \phi(x,Q)$ for some $x^\nu\to x$ and $Q^\nu\to Q$. Without loss of generality, we assume that $Q = P$. We construct $\{x^\nu = \bar x, Q^\nu = P, \nu \in \N\}$. Then,
\[
\slim_{\nu\to +\infty} \phi^\nu(x^\nu,Q^\nu) \leq \ilim_{\nu\to +\infty} \E^{P}\big[g^\nu(\bar x + \xi)\big] + \slim_{\nu\to +\infty} \theta^\nu d_\PP(P,P^\nu) \leq \E^P\big[g(\bar x + \xi)\big] = \phi(\bar x,P).
\]
The last inequality follows from (v), which in conjunction with (ii) permits us to invoke the dominated convergence theorem.
\end{proof}

\subsection{PDE-constrained Optimization}

We next turn to the application of Theorem \ref{th:epi} in the context of PDE-constrained optimization and again assume that $\X$ and $\Xi$ are metric spaces. Suppose that we are faced with a PDE parameterized by $\xi\in \Xi$, representing parameters subject to uncertainty, and $x\in \X$ representing control input. We assume that for each $\xi\in \Xi$ and $x\in \X$, there is a unique solution of the PDE denoted by $s(\xi,x)$ in a metric space $(\U,d_\U)$, i.e., there is a well defined mapping
\[
s:\Xi\times \X\to \U
\]
furnishing solutions of the PDE for the various input parameters. Suppose that $g:\U\times \X\to \RR$ represents a quantity of interest and $P$ is a probability on $(\Xi,\B(\Xi))$ modeling the randomness in the parameter $\xi$, with $\Xi$ being a metric space and $\B(\Xi)$ its Borel sigma-algebra. Thus, a PDE-constrained optimization problem tends to involve an expectation function
\[
E^P[f]:\X\to \RR, ~~\mbox{ where the integrand } f:\Xi\times \X\to \RR \,\mbox{ has } f(\xi,x) = g\big(s(\xi,x),x\big).
\]
For computational and/or modeling reasons, the probability $P$ needs to be replaced by probabilities $P^\nu$ on $(\Xi,\B(\Xi))$, $g$ replaced by an approximation $g^\nu:\U\times\X\to \RR$, and the solution mapping $s$ by $s^\nu:\Xi\times \X\to \U$. The latter usually represents numerical solutions of the PDE. If $\X$ is infinite-dimensional, then it might need to be approximated by a set $X^\nu \subset \X$. The approximation $g^\nu$ could then simply be defined by $g^\nu(u,x) = g(u,x) + \iota_{X^\nu}(x)$. Regardless of the specifics, the setting produces an approximating expectation function
\[
E^{P^\nu}[f^\nu]:\X\to \RR, ~~\mbox{ where the integrand } f^\nu:\Xi\times \X\to \RR \,\mbox{ has } f^\nu(\xi,x) = g^\nu\big(s^\nu(\xi,x),x\big).
\]
A PDE-constrained optimization problem involving $E^P[f]$ can now be approximated using $E^{P^\nu}[f^\nu]$, with this approach being justified through epi-convergence. Theorem \ref{th:epi} provides sufficient conditions for such epi-convergence. Prior work do not consider epi-convergence in this general setting; see for example \cite{ChenRoyset.22b}, which leverages continuous convergence of expectations and thus needs relatively strong assumptions.

We show via  Theorem \ref{th:epi} that $E^{P^\nu}[f^\nu]$ epi-converges to $E^{P}[f]$ under mild assumptions.

\begin{proposition}\label{prop:mollifersKPO}{\rm(PDE-constrained optimization)}
In the notation of this subsection, suppose that $P^\nu$ converges weakly to $P$ and the following hold for each $x\in \X$:
\begin{itemize}
\item[{\rm(i)}] $\{s(\cdot, x), s^\nu(\cdot, x), g(\cdot,x), g^\nu(\cdot,x), \nu\in \N\}$ are continuous;

\item[{\rm(ii)}] one has
\begin{equation}
\ilim_{K\to+\infty}\ilim_{(\nu,y)\to(+\infty,x)} \E^{P^\nu}\big[ \big(g^\nu(s^\nu(\xi,y),y)\big)\h\big\{\xi\,:\, g^\nu(s^\nu(\xi,y),y)\le-K\big\}\big]=0;
\end{equation}

\item[{\rm(iii)}] for $P$-a.e. $\xi\in \Xi$,
\begin{align*}
\lim_{(\nu,y,\zeta)\to(+\infty,x,\xi)} s^\nu(\zeta,y) & = s(\xi,x)\\
\lim_{(\nu,v)\to(+\infty,u)} g^\nu(v,x) & = g(u,x)\\
\ilim_{(\nu,y,v)\to(+\infty,x,u)} g^\nu(v,y) & \ge g(u,x);
\end{align*}

\item[{\rm(iv)}] if $\E^P[g(s(\xi,x),x)]< \infty$, then
\begin{equation}
\slim_{K\to+\infty}\slim_{\nu\to +\infty} \E^{P^\nu}\big[ \big(g^\nu(s^\nu(\xi,x),x)\big)\h\big\{\xi\,:\, g^\nu(s^\nu(\xi,x),x)\ge K\big\}\big]=0.
\end{equation}

\end{itemize}
Then, $E^{P^\nu}[f^\nu]$ epi-converges to $E^{P}[f]$.
\end{proposition}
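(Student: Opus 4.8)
The plan is to verify the hypotheses of Theorem~\ref{th:epi} for the integrands $f^\nu(\xi,x) = g^\nu(s^\nu(\xi,x),x)$ and $f(\xi,x) = g(s(\xi,x),x)$, so that \eqref{eq:th42:1} follows immediately. Theorem~\ref{th:epi} requires assumptions (i)--(iii) of Theorem~\ref{th:weak-axill} together with, for each $x$ with $E^P[f](x)<+\infty$, a recovery sequence $\{x^\nu\}$ satisfying \eqref{eq:uiapper} and \eqref{eq:conv1conv}. I would check these in turn. The measurability in Theorem~\ref{th:weak-axill}(i) follows because for fixed $x$ the map $\xi\mapsto g^\nu(s^\nu(\xi,x),x)$ is a composition of the continuous maps in assumption~(i): $s^\nu(\cdot,x)$ is continuous, hence measurable, and $g^\nu(\cdot,x)$ is continuous, so the composition is measurable; likewise for $f$. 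The uniform-integrability-from-below condition \eqref{eq:ui} is exactly assumption~(ii), since $f^\nu(\xi,y)=g^\nu(s^\nu(\xi,y),y)$.

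The substantive step is verifying the lower-limit inequality \eqref{eq:conv1}, namely that for $P$-a.e.\ $\xi$,
\[
\ilim_{(\nu,y,\zeta)\to(+\infty,x,\xi)} g^\nu(s^\nu(\zeta,y),y) \ge g(s(\xi,x),x).
\]
Here I would exploit the three limit relations in assumption~(iii). Fix a sequence $(\nu_k,y_k,\zeta_k)\to(+\infty,x,\xi)$ realizing the lower limit. By the first relation, $u_k := s^{\nu_k}(\zeta_k,y_k)\to s(\xi,x)=:u$. Writing $v_k:=u_k$, the third relation in~(iii) is the joint lower-semicontinuity statement $\ilim_{(\nu,y,v)\to(+\infty,x,u)} g^\nu(v,y)\ge g(u,x)$, applied along $(\nu_k,y_k,v_k)\to(+\infty,x,u)$, which yields precisely $\ilim_k g^{\nu_k}(u_k,y_k)\ge g(u,x)=g(s(\xi,x),x)$. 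Thus \eqref{eq:conv1} holds, and by Theorem~\ref{th:weak-axill} the liminf half of epi-convergence is established.

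For the limit (recovery) half, fix $x$ with $E^P[f](x)<+\infty$ and take the constant sequence $x^\nu:=x$. Then \eqref{eq:uiapper} is exactly assumption~(iv), which is invoked precisely when $E^P[f](x)<+\infty$. For \eqref{eq:conv1conv} I must show $\slim_{(\nu,\zeta)\to(+\infty,\xi)} g^\nu(s^\nu(\zeta,x),x)\le g(s(\xi,x),x)$ for $P$-a.e.\ $\xi$. Taking a sequence $(\nu_k,\zeta_k)\to(+\infty,\xi)$ realizing the upper limit, the first relation of~(iii) gives $u_k:=s^{\nu_k}(\zeta_k,x)\to s(\xi,x)=:u$, and then the second relation $\lim_{(\nu,v)\to(+\infty,u)} g^\nu(v,x)=g(u,x)$ forces $g^{\nu_k}(u_k,x)\to g(u,x)$, so the upper limit equals $g(s(\xi,x),x)$, giving~\eqref{eq:conv1conv} with equality. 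With (i)--(iv) of Theorem~\ref{th:epi}'s hypotheses all verified, its conclusion delivers the result.

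The main obstacle is the joint-limit bookkeeping in \eqref{eq:conv1}: one must be careful that the convergence $s^\nu(\zeta,y)\to s(\xi,x)$ holds \emph{jointly} as $(\nu,y,\zeta)\to(+\infty,x,\xi)$ (not merely for fixed $y$), so that the argument $u_k=s^{\nu_k}(\zeta_k,y_k)$ genuinely converges to $u=s(\xi,x)$ and feeds into the joint lower-semicontinuity of $g^\nu$ in its third relation. This is exactly why assumption~(iii) is stated with full joint limits rather than iterated ones, and the only care needed is to align the mode of convergence of the inner argument with the joint hypothesis on $g^\nu$. The $P$-a.e.\ qualifier propagates through since the exceptional null set is the one on which~(iii) fails.
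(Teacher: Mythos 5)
Your proposal is correct and takes essentially the same approach as the paper: it verifies assumptions (i)--(iii) of Theorem~\ref{th:weak-axill} for the composed integrands $f^\nu(\xi,x)=g^\nu(s^\nu(\xi,x),x)$, $f(\xi,x)=g(s(\xi,x),x)$, and then the remaining conditions of Theorem~\ref{th:epi} with the constant recovery sequence $x^\nu=x$, invoking assumption (iv) exactly when $E^P[f](x)<+\infty$. Your explicit sequence bookkeeping for \eqref{eq:conv1} and \eqref{eq:conv1conv} merely spells out the composition arguments the paper leaves implicit (and, incidentally, gets the direction of \eqref{eq:conv1conv} right, whereas the paper's displayed inequality at that step shows $\ge$ where $\le$ is intended).
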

\begin{proof}
We first verify the assumptions of Theorem \ref{th:weak-axill} for the integrands given by $f(\xi,x) = g(s(\xi,x),x)$ and $f^\nu(\xi,x) = g^\nu(s^\nu(\xi,x),x)$. The continuity properties from assumption (i) confirm assumption (i) in Theorem \ref{th:weak-axill}. Assumptions (ii) and (iii) of Theorem \ref{th:weak-axill} hold by our assumptions (ii) and (iii), respectively.

Second, we consider the remaining assumptions of Theorem \ref{th:epi} with $x^\nu = x$ and $E^P[g(s(\xi,x),x)] <+\infty$. Assumption (i) of Theorem \ref{th:epi} holds by our assumption (iv). For $P$-a.e. $\xi\in \Xi$, our assumption (iii) implies that
\[
\slim_{(\nu,\zeta)\to(+\infty,\xi)} g^\nu\big(s^\nu(\zeta,x),x\big) \ge g\big(s(\xi,x),x\big).
\]
Thus, Assumption (ii) of Theorem \ref{th:epi} holds and the theorem applies.
\end{proof}

\subsection{Expectation Constraints}

While an expectation function might arise as the (effective) objective function in an optimization problem, and then the study of epi-convergence of its approximations becomes directly relevant, we may also have expectation functions arising as constraint functions. Still, epi-convergence remains a key tool and we again leverage Theorem \ref{th:epi}.

For continuous $f_0:\X\to \R$, integrands $\{f,f^\nu:\Xi\times \X\to \RR, \nu\in \N\}$, and probabilities $\{P,P^\nu, \nu\in \N\}$ on $(\Xi,\B(\Xi))$, suppose that we are faced with the problem
\[
\nnmin_{x\in \X} \,\phi(x) := f_0(x) + \iota_{(-\infty,0]}\big( E^P[f](x)\big).
\]
The expectation function needs to be approximated by $E^{P^\nu}[f^\nu]$. Suppose that the assumptions of Theorem \ref{th:epi} holds so that $E^{P^\nu}[f^\nu]$ epi-converges to $E^P[f]$. Then, the approximating problem
\[
\nnmin_{x\in \X} \,\phi^\nu(x) := f_0(x) + \theta^\nu\max\big\{0, E^{P^\nu}[f^\nu](x)\big\}
\]
is justified because $\phi^\nu$ epi-converges to $\phi$ under the assumption that the scalar $\theta^\nu\to +\infty$ and the following constraint qualification holds: For each $x\in \X$ satisfying $E^P[f](x) = 0$, there exist points $x^\nu \in \X \to x$ such that $E^P[f](x^\nu) < 0$.

We see this as follows. It is straightforward to confirm that $\ilim_{\nu\to +\infty} \phi^\nu(x^\nu) \geq \phi$ whenever $x^\nu \to x$. The limit condition in the definition of epi-convergence is confirmed by the following argument. Let $x\in \X$. Without loss of generality, we assume that $E^P[f](x) \leq 0$. There are two cases.

(a) Suppose that $E^P[f](x) < 0$. Since $E^{P^\nu}[f^\nu]$ epi-converges to $E^P[f]$, there exist points $x^\nu\to x$ such that
\[
\slim_{\nu\to +\infty} E^{P^\nu}[f^\nu](x^\nu) \leq E^P[f](x).
\]
Thus, for sufficiently large $\nu$, $E^{P^\nu}[f^\nu](x^\nu) \leq 0$. This means that
\begin{equation}\label{eqn:constraint}
\slim_{\nu \to +\infty} \phi^\nu(x^\nu) = \slim_{\nu \to +\infty} f_0(x^\nu) + \slim_{\nu \to +\infty} \theta^\nu\max\big\{0, E^{P^\nu}[f^\nu](x^\nu)\big\} \leq f_0(x) = \phi(x).
\end{equation}

(b) Suppose that $E^P[f](x) = 0$. By assumption, there exist points $\bar x_k \in \X \to x$ such that $E^P[f](\bar x_k) < 0$ for $k\in \N$. Fix $k$. Since $E^{P^\nu}[f^\nu]$ epi-converges to $E^P[f]$, there exist points $x_k^\nu \to \bar x_k$ as $\nu\to +\infty$ such that $\slim_{\nu\to +\infty} E^{P^\nu}[f^\nu](x_k^\nu) \leq E^P[f](\bar x_k)$. Thus, there is $\nu_k$ such that for all $\nu \geq \nu_k$, $E^{P^\nu}[f^\nu](x_k^\nu) \leq 0$. Without loss of generality, we assume that $\nu_k > \nu_l$ for $k> l$. Construct for $k \in \N$: for $\nu_{k+1}>\nu \geq \nu_k$, set $x^\nu = x_k^{\nu_k}$. This produces $\{x^\nu\}_{\nu=\nu_1}^\infty$ with the property $x^\nu \to x$. Moreover, $E^{P^\nu}[f^\nu](x^\nu) = E^{P^\nu}[f^\nu](x_k^{\nu_k}) \leq 0$ for some $k$. Thus, \eqref{eqn:constraint} holds again.

\appendix

\section{Proof of Lemma~\ref{lm:general_lsc}}\label{app}

The proof of Lemma~\ref{lm:general_lsc} uses the following properties of the Pasch-Hausdorff envelope.
\begin{lemma}\label{lm:newnew}
For each $\kappa\in [0,+\infty)$ and $h:\X\to\RR$, let $h_\kappa$ be its Pasch-Hausdorff envelope. Then the following properties are equivalent:
\begin{itemize}
\item[{\rm(a)}] $h(x) = +\infty$ for all $x\in\X$;
\item[{\rm(b)}] $h_\kappa(\bar{x})=+\infty$ for some  $\bar{x}\in\X$;
\item[{\rm(c)}] $h_\kappa(x)=+\infty$ for all $x\in\X$.
\end{itemize}
\end{lemma}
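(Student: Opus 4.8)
The plan is to establish the three equivalences through the cycle $(a)\Rightarrow(c)\Rightarrow(b)\Rightarrow(a)$, relying throughout on the single structural fact that, for the fixed finite $\kappa\in[0,+\infty)$ and any $x,x'\in\X$, the quantity $\kappa\, d_\X(x,x')$ is a finite nonnegative real. Consequently, the terms in the defining infimum \eqref{eq:appdefiop1} are governed by the conventions $+\infty + r = +\infty$ and $-\infty + r = -\infty$ for $r\in[0,+\infty)$, and this is the only place where the arithmetic of $\RR$ enters.

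First I would prove $(a)\Rightarrow(c)$. If $h(x')=+\infty$ for every $x'\in\X$, then each term satisfies $h(x')+\kappa\, d_\X(x,x')=+\infty$, so that
\[
h_\kappa(x)=\inf_{x'\in\X}\big\{h(x')+\kappa\, d_\X(x,x')\big\}=+\infty
\]
for every $x\in\X$, which is $(c)$. The implication $(c)\Rightarrow(b)$ is then immediate: since $\X$ is nonempty, one simply specializes the universally quantified statement in $(c)$ to an arbitrary single point $\bar x$.

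The only implication requiring a genuine argument is $(b)\Rightarrow(a)$, which I would establish by contraposition. Assume $\neg(a)$, i.e., there exists $x_1\in\X$ with $h(x_1)<+\infty$; I claim $h_\kappa(\bar x)<+\infty$ for every $\bar x\in\X$, which is $\neg(b)$. Indeed, by the definition of the infimum,
\[
h_\kappa(\bar x)=\inf_{x'\in\X}\big\{h(x')+\kappa\, d_\X(\bar x,x')\big\}\le h(x_1)+\kappa\, d_\X(\bar x,x_1).
\]
Since $\kappa\, d_\X(\bar x,x_1)$ is a finite real and $h(x_1)\in\{-\infty\}\cup\R$, the right-hand side equals $-\infty$ when $h(x_1)=-\infty$ and is a finite real when $h(x_1)\in\R$; in either case it is strictly below $+\infty$. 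Hence $h_\kappa(\bar x)<+\infty$ for all $\bar x$, establishing $\neg(b)$ and completing the cycle.

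The main (and admittedly minor) obstacle is simply to isolate where finiteness of $\kappa$ is used: the bound in the last display degenerates only if $\kappa\, d_\X(\bar x,x_1)$ could be $+\infty$, a situation excluded precisely by $\kappa\in[0,+\infty)$ together with the real-valuedness of the metric. No separate treatment of the boundary case $\kappa=0$ is needed, since then $h_\kappa\equiv\inf_{x'\in\X}h(x')$ is constant in $x$ and all three conditions reduce to the single statement $\inf_{x'\in\X}h(x')=+\infty$, i.e., to $(a)$.
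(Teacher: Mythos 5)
Your proposal is correct and takes essentially the same approach as the paper: the paper's own proof is just the one-line remark that the equivalences ``follow directly from the definition of the Pasch-Hausdorff envelope in \eqref{eq:appdefiop1},'' and your cycle $(a)\Rightarrow(c)\Rightarrow(b)\Rightarrow(a)$ is precisely that direct verification spelled out, with the key observation (finiteness of $\kappa\, d_\X(\bar x, x_1)$, so the infimum is $+\infty$ exactly when every term $h(x')$ is $+\infty$) correctly isolated.
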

\begin{proof}
The proof follows directly from the definition of the Pasch-Hausdorff envelope in \eqref{eq:appdefiop1}.
\end{proof}

\begin{proof}[Proof of Lemma~\ref{lm:general_lsc}.] Let us  set, for $\nu\in \N$ and for $x\in\X,$
\begin{equation}\label{eq:l1}
h^\nu(x):=E^{P^\nu}[f^\nu](x).
\end{equation}

If there exists $\nu^0\in\N$ such that $h^\nu(y)=+\infty$ for all $\nu\ge\nu_0$ and for all $y\in\X,$ then, according to Lemma~\ref{lm:newnew}(a,c), $h^\nu_\kappa(y)=+\infty$ for all $y\in\X$, where $h^\nu_\kappa$ is the  Pasch-Hausdorff envelope of $h^\nu$. In this case, the conclusions of Lemma~\ref{lm:general_lsc} hold.

Otherwise, for each $\nu^0\in\N$ there exists $\nu>\nu_0$ such that $h^\nu(y^\nu)<+\infty$ for some $y^\nu\in\X.$ In this case, due to Lemma~\ref{lm:newnew}(a,b), $h^\nu_\kappa(x)<+\infty$ for all $x\in\X$ and for all $\kappa\in(0,+\infty)$. Therefore, for each $x\in \X$
\begin{equation}\label{eq:l2}
\begin{aligned}
E^{P^\nu}\big[f_\kappa ^\nu\big](x) &=
\E^{P^\nu}\Big[\inf_{y\in \X}\big\{f^\nu(\xi,y)+\kappa d_\X(y,x)\big\}\Big]\\
&\le \inf_{y\in \X} \E^{P^\nu}\big[f^\nu(\xi,y)+\kappa d_\X(y,x)\big]
=\inf_{y\in \X} \{h^\nu(y)+\kappa d_\X(y,x)\}=
h_\kappa^\nu(x),
\end{aligned}
\end{equation}
where the first equality follows from the definition of $f_\kappa ^\nu(\mathbf{\xi},x)=\inf_{y\in \X}\{f^\nu(\xi,y)+\kappa d_\X(y,x)\},$ which is $\hat{\B}^\nu(\Xi)$-measurable in $\xi$ in view of \cite[Proposition 4.2]{Hess}, the inequality holds because the expectation of the measurable infimum is not greater than the infimum of the expectation, the second equality follows from \eqref{eq:l1}, and the last equality follows from \eqref{eq:appdefiop1}.

Fix an arbitrary $x_0\in\X$. Inequalities \eqref{eq:l2} and \eqref{eq:ass1} imply that  for each $\kappa \in[\kappa_0,+\infty)$
\begin{equation}\label{eq:l3}
-\infty<E^{P}\big[f_\kappa \big](x_0)\le
\ilim\limits_{\nu \to +\infty}h_\kappa ^\nu(x_0).
\end{equation}
Therefore, there exist $\nu(x_0)\in\mathbb{N}$ and $\gamma(x_0)\in(0,+\infty)$ such that
\begin{equation}\label{eq:n2}
-\infty<-\gamma(x_0)\le h_{\kappa} ^\nu(x_0)\quad  \mbox{for each } \nu\ge \nu(x_0)\mbox{ and for each }\kappa\ge\kappa_0.
\end{equation}
Since \eqref{eq:l3} implies \eqref{eq:app5}, then, according to \eqref{eq:app6},
\begin{equation}\label{eq:l5}
\ilim\limits_{(\nu,y)\to(+\infty,x)}h^\nu(y) = \sup_{\kappa\in \N}\ilim\limits_{\nu \to +\infty}h_\kappa ^\nu(x)\quad \mbox{for each }x\in\X.
\end{equation}

Let us fix an arbitrary $\kappa> \kappa_0$ and prove that either the function $x\mapsto \ilim_{\nu \to +\infty}h_\kappa ^\nu(x)$ is identically equals to $+\infty$ or it is real-valued and Lipschitz continuous with modulus $\kappa$. Indeed, let
\begin{equation}\label{eq:n3}
\ilim\limits_{\nu \to +\infty}h_\kappa ^\nu(\tilde{x})<+\infty\mbox{ for some }\tilde{x}\in\X.
\end{equation}
For  $x\in\X$ we set
\[
I(x):=\{\nu\in\mathbb{N}\,:\,\nu\ge\nu(x_0)\mbox{ and } h_\kappa^\nu(x)<+\infty\}.
\]
Due to Lemma~\ref{lm:newnew}(b,c), $I:=I(x)=I(y)$ for all $x,y\in\X.$ Moreover, \eqref{eq:n3} implies that $I$ is an infinite set. Therefore, there exists a strictly increasing sequence $\{\nu_m\}_{m\in\mathbb{N}}\subset\N$ such  that $I=\{\nu_m\,:\, m\in\mathbb{N}\}$ and
\begin{equation}\label{eq:n5}
\ilim\limits_{\nu \to +\infty}h_\kappa ^\nu(x)=\ilim\limits_{m\to+\infty}h_\kappa ^{\nu_m}(x)
\end{equation}
for each $x\in\X$. Therefore, the definition of $I,$ \eqref{eq:n2}, and \eqref{eq:app4} imply that each function $h_\kappa^{\nu_m}$, $m\in\mathbb{N},$ is real-valued and Lipschitz continuous with modulus $\kappa.$ Therefore,
\begin{equation}\label{eq:n4}
-\infty<h_\kappa^{\nu_m}(y) - \kappa d_\X(x,y)\le h_\kappa ^{\nu_m}(x)\le h_\kappa^{\nu_m}(y)+\kappa d_\X(x,y)<+\infty
\end{equation}
for each $x,y\in\X$. Setting $y=x_0$ in \eqref{eq:n4} and passing to the lower limit as $m\to+\infty$ in the second inequality, we obtain that
\begin{equation}\label{eq:n6}
-\infty<\ilim_{m\to+\infty}h_\kappa^{\nu_m}(x_0) - \kappa d_\X(x,x_0)\le \ilim_{m\to+\infty}h_\kappa ^{\nu_m}(x)
\end{equation}
for each $x\in\X$, where the first inequality follows from \eqref{eq:l3}. Similarly, setting $y=\tilde{x}$ in \eqref{eq:n4} and passing to the lower limit as $m\to+\infty$ in the third inequality, we obtain that
\begin{equation}\label{eq:n7}
\ilim_{m\to+\infty}h_\kappa ^{\nu_m}(x)\le\ilim_{m\to+\infty}h_\kappa^{\nu_m}(\tilde{x}) + \kappa d_\X(x,\tilde{x})<+\infty
\end{equation}
for each $x\in\X$, where the second inequality follows from \eqref{eq:n3}. Finally, \eqref{eq:n5}, \eqref{eq:n6} and \eqref{eq:n7} imply that, under \eqref{eq:n3}, the function $x\mapsto \ilim_{\nu \to +\infty}h_\kappa ^\nu(x)$ is real-valued. Moreover, if we pass to the lower limit as $m\to+\infty$ in the second and the third inequalities of \eqref{eq:n4} for each $x,y\in\X$ and use equality \eqref{eq:n5} in each summand, then we obtain that the function $x\mapsto \ilim_{\nu \to +\infty}h_\kappa ^\nu(x)$ is Lipschitz continuous with modulus $\kappa$.

If the function $x\mapsto \ilim_{\nu \to +\infty}h_\kappa ^\nu(x)$ is identically equal to $+\infty$, then, according to \eqref{eq:l5}, the conclusions of Lemma~\ref{lm:general_lsc}  hold.
To finish the proof let us consider the case, when the function $x\mapsto \ilim_{\nu \to +\infty}h_\kappa ^\nu(x)$ is real-valued and Lipschitz continuous with modulus $\kappa$.

Let us prove that the function $E^{P}[f_\kappa]$ is real-valued and Lipschitz continuous with modulus $\kappa$. Indeed, since $\ilim_{\nu \to +\infty}h_\kappa ^\nu(x_0)<+\infty$, then inequalities \eqref{eq:l3} imply that $E^{P}\big[f_\kappa \big](x_0)$ is a real number. Thus,
\begin{equation}\label{eq:nn1}
f_\kappa(\xi,x_0)\mbox{ is finite for }P\mbox{-a.e. }\xi\in\Xi.
\end{equation}
Therefore, according to \eqref{eq:app4}, for $P$-a.e. $\xi\in\Xi$ the function $x\mapsto f_\kappa(\xi,x)$ is real-valued and Lipschitz continuous with modulus $\kappa$, that is,
\begin{equation}\label{eq:nn2}
f_\kappa(\xi,y)-\kappa d_\X(x,y)\le f_\kappa(\xi,x)\le f_\kappa(\xi,y)+\kappa d_\X(x,y)
\end{equation}
for each $x,y\in\X$. The function $E^{P}[f_\kappa ]$ is real-valued because
\[
-\infty<E^{P}\big[f_\kappa \big](x_0)-\kappa d_\X(x,x_0)\le E^{P}\big[f_\kappa \big](x)\le E^{P}\big[f_\kappa \big](x_0)+\kappa d_\X(x,x_0)<+\infty,
\]
where the first and the last inequalities hold because $E^{P}[f_\kappa](x_0)$ is a real number, and the second and the third inequalities follow from inequalities \eqref{eq:nn2} with $y=x_0$ if we take the expectation $\E^P$ of them. Now let us take the expectation $\E^P$ of \eqref{eq:nn2} for each $x,y\in\X$. We obtain that the function $E^{P}[f_\kappa ](x)$ is Lipschitz continuous with modulus $\kappa$.

The second inequality in \eqref{eq:l3} holds for each $x\in\X$ because both sides of this inequality are real-valued and Lipschitz with modulus $\kappa$.
Therefore,
\begin{equation}\label{eq:l4}
 -\infty< \sup_{\kappa\in \N}E^{P}\big[f_\kappa \big](x)\le \sup_{\kappa\in \N}E^{P}\big[f_\kappa \big](x) =
\ilim\limits_{(\nu,y)\to(+\infty,x)}E^{P^\nu}[f^\nu](x) \quad\mbox{for each } x\in\X,
\end{equation}
where the equality follows from \eqref{eq:l5} and \eqref{eq:l1}. Moreover, \eqref{eq:nn1}, \eqref{eq:app31} with $h :=f(\xi,\,\cdot\,),$ and lsc of $f(\xi,\,\cdot\,)$ for $P$-a.e. $\xi\in\Xi$ imply that $f_\kappa (\xi,x)\uparrow f(\xi,x)$ for each $x\in\X$ and for $P$-a.e. $\xi\in\Xi.$ Therefore, the first inequality in \eqref{eq:l4} and the monotone convergence theorem imply
\begin{equation}\label{eq:l6}
\sup_{\kappa \in \N }E^{P}\big[f_\kappa \big](x) = E^P \big[f\big](x) \quad \mbox{for each }x\in\X.
\end{equation}
Finally, \eqref{eq:l4}--\eqref{eq:l6} imply that $\ilim_{(\nu,y)\to(+\infty,x)} E^{P^\nu}[f^\nu](y)\ge E^P \big[f\big](x)>-\infty.$
 \end{proof}

\noindent {\bf Acknowledgements.} The research of the third author is supported in part by the Office of Naval Research, Science of Autonomy (N0001421WX00142) and the Air Force Office of Scientific Research, Mathematical Optimization (21RT0484).

\end{document}